\newtheorem{thm}{Theorem}
\newtheorem{cor}[thm]{Corollary}
\newtheorem{lem}[thm]{Lemma}
\newtheorem{prop}[thm]{Proposition}
\newtheorem{defn}[thm]{Definition}
\newtheorem{rem}[thm]{Remark}
\numberwithin{thm}{section}
\numberwithin{equation}{section}
\newcommand{\Real}{\mathbb R}
\newcommand{\norm}[1]{\left\Vert#1\right\Vert}
\newcommand{\abs}[1]{\left\vert#1\right\vert}
\newcommand{\eps}{\varepsilon}
\newcommand{\la}{\langle}
\newcommand{\ra}{\rangle}
\newcommand{\A}{\mathcal{A}}
\newcommand{\C}{\mathbb{C}}
\newcommand{\Fb}{\mathbb{F}}
\newcommand{\N}{\mathbb{N}}
\newcommand{\sg} {\sigma}
\newcommand{\Z}{\mathbb{Z}}
\newcommand{\Om}{\Omega}
\newcommand{\om}{\omega}
\begin{document}
\title[Some weighted group algebras are operator algebras]
{Some weighted group algebras are operator algebras}
\author{Hun Hee Lee}
\address{Department of Mathematical Sciences,
Seoul National University, San56-1 Shinrim-dong Kwanak-gu
Seoul 151-747, Republic of Korea}
\email{hhlee@chungbuk.ac.kr}

\author{Ebrahim Samei}
\address{Department of Mathematics and Statistics, University of Saskatchewan, Saskatoon, Saskatchewan, S7N 5E6, Canada}
\email{samei@math.usask.ca}

\author{Nico Spronk}
\address{Department of Pure Mathematics, University of Waterloo, Waterloo,
Ontario, N2L 3G1, Canada}
\email{nspronk@math.uwaterloo.ca}

\thanks{Hun Hee Lee was supported by Basic Science Research Program through the National
Research Foundation of Korea(NRF) funded by the Ministry of Education, Science and Technology (2010-
0015222). Ebrahim Samei was supported by NSERC under grant no. 366066-09. Nico Spronk
was supported by NSERC under grant no. 312515-05.}

\subjclass{Primary 22D15, 47L30; Secondary 47L25.}

\keywords{Weighted group algebras, operator algebras,
groups with polynomial growth, Littlewood multipliers}

\begin{abstract}
Let $G$ be a finitely generated group with polynomial growth, and let $\om$ be a weight, i.e. a sub-multiplicative function
on $G$ with positive values. We study when the weighted group algebra $\ell^1(G,\om)$ is isomorphic to an operator algebra.
We show that $\ell^1(G,\om)$ is isomorphic to an operator algebra if $\om$ is a polynomial weight with large enough degree or an exponential weight of order $0<\alpha<1$.
We will demonstrate the order of growth of $G$ plays an important role in this question.
Moreover, the algebraic centre of $\ell^1(G,\om)$ is isomorphic to a $Q$-algebra and hence
satisfies a multi-variable von Neumann inequality. We also present a more detailed study of
our results when $G$ is the $d$-dimensional integers $\Z^d$ and 3-dimensional discrete Heisenberg group $\mathbb{H}_3(\Z)$.
The case of the free group with two generators will be considered as a counter example of groups with exponential growth.
\end{abstract}

\maketitle

\section{Introduction}

The motivation for this paper was originated from a result of Varopoulos which states that
certain weighted group algebras on integers are isomorphic to $Q$-algebras \cite{V}.
We recall that a commutative Banach algebra is called a $Q$-algebra if it is a quotient of a uniform algebra.
There are interesting (and non-trivial) classes of Banach algebras which are isomorphic to $Q$-algebras.
For instance, it is shown in \cite{V} and \cite{Dav} that the spaces $\ell^p$ $(1\leq p \leq \infty)$ with pointwise product are isomorphic to  $Q$-algebras.
The case of the Schatten Spaces $S_p$, $1\leq p \leq \infty$, endowed with the Schur product, has been considered by many researchers (\cite{Le} and \cite{P-G}), and it has been very recently answered in full generality (\cite{BBLV}).

Let $G$ be a discrete group, and let $\om : G \to (0,\infty)$ be a weight on $G$, i.e.
	$$\om(xy)\leq \om(x)\om(y),\;\; x,y\in G.$$
The {\it weighted group algebra} $\ell^1(G,\om)$ is the convolution algebra of functions
$f$ on $G$ such that $\|f\|_{\ell^1(G,\om)}=\sum_{x\in G} |f(x)|\om(x) < \infty$.  Varopoulos showed that
in the case where $G=\Z$ and $\om_\alpha(n)=(1+|n|)^\alpha$ ($\alpha \ge 0$),
$\ell^1(\Z, \om_\alpha)$ is isomorphic to a $Q$-algebra if and only if $\alpha > 1/2$.

We would like to extend Varopoulos's result to other classes of weighted group algebras, possibly on non-abelian groups.
However, group algebras are non-commutative in general, so that we can not hope for them to be isomorphic to $Q$-algebras.
Instead, we would like to investigate whether a weighted group algebra is isomorphic to an operator algebra.
Recall that an {\it operator algebra} is a closed subalgebra of $B(H)$, the algebra of all bounded operators on a Hilbert space $H$.
Note that any $Q$-algebra is an operator algebra (\cite[Theorem 1.1]{Dav}).
In the proof, Varopoulos actually proved that $\ell^1(\Z, \om_\alpha)$ satisfies one of the sufficient conditions to be isomorphic to a $Q$-algebra, namely it is an {\it injective algebra}. Recall that a Banach algebra $\A$ is called an {\it injective algebra} if the algebra multiplication map $m$ extends to a bounded map on the injective tensor product:
	$$m: \A\otimes_\eps \A \to \A.$$
In this paper we also focus on the case where $\ell^1(G,\om)$ becomes an injective algebra.
Using a Littlewood multiplier argument we will show that $\ell^1(G,\om)$ is an injective algebra, and consequently is isomorphic to an operator algebra, if $G$ is a finitely generated group with polynomial growth and $\om$ is a polynomial weight with a large enough degree or a certain exponential weight. Such weights will be defined later in this paper.

This paper is organized as follows. In Section \ref{S:algebras} we recall several basic facts about injective algebras and $Q$-algebras. In Section \ref{S:weighted} we give an equivalence condition for $\ell^1(G,\om)$ to be isomorphic to an operator algebra.
In Section \ref{S:Littlewood mult} we recall the definitions of Littlewood multipliers and its consequences. In Section \ref{S:Groups poly. growth}, we give the necessary background on finitely generated groups with polynomial growth and how one can use the length function to define various weights such as polynomial and exponential weights on these groups.
In Sections \ref{S:Poly. weight-poly growth} and \ref{S:Exp. weight-poly growth}, we will show our main results, namely the case where $\ell^1(G,\om)$ is isormophic to an operator algebra.
Moreover, we will check that the algebraic centre of $\ell^1(G,\om)$ is a $Q$-algebra in this case, and hence,
it satisfies the $(\delta,L)$-multi-variable von Neumann inequality (see Section \ref{S:Poly. weight-poly growth}). We also find estimates for the upper bound of the norm of the multiplication map of the algebra
for various weights and use them to determine concrete values of $\delta$ and $L$.

Finally, in Section \ref{S:Examples}, we apply our techniques to study the cases when $G$ is the $d$-dimensional integers $\Z^d$ or 3-dimensional discrete Heisenberg group $\mathbb{H}_3(\Z)$. The case of the free group with two generators will be examined to give a reasonable explanation why we mainly focus on groups with polynomial growth.

\section{Preliminaries}\label{S:Preliminaries}

In this paper, all our groups are discrete.

\subsection{$p$-summing algebras, injective algebras and $Q$-algebras}\label{S:algebras}

We first recall some definitions. Let $X$ and $Y$ be Banach spaces.
For $1\le p <\infty$, a sequence $(x_n)_{n\ge 1} \subset X$ is called $p$-summable (resp. weakly $p$-summable) if
	$$\norm{(x_n)}_p = \left(\sum_{n\ge 1}\norm{x_n}^p\right)^{\frac{1}{p}} <\infty.\;\;
	(\text{resp.}\;\norm{(x_n)}^w_p = \sup_{\varphi \in B_{X^*}}\left(\sum_{n\ge 1}\abs{\varphi(x_n)}^p\right)^{\frac{1}{p}} <\infty.)$$
The Chevet-Saphar tensor norms on the algebraic tensor product $X\otimes Y$ are defined by
	$$g_p(u) = \inf\{\norm{(x_j)}_p\norm{(y_j)}^w_{p'} : u = \sum^n_{j=1}x_j\otimes y_j, \, x_j\in X, \, y_j\in Y\},$$
where $p'$ is the conjugate index of $p$.
We denote the completion of $(X\otimes Y, g_p)$  by $X\otimes_{g_p}Y$.

We say that a linear map $T : X\to Y$ is $p$-summing if there is a constant $C>0$ such that
	$$\norm{(Tx_n)}_p \le C\norm{(x_n)}^w_p$$
for any sequence $(x_n)_{n\ge 1} \subset X$.
We denote the infimum of such $C$ by $\pi_p(T)$, and
$\Pi_p(X,Y)$ refers to the Banach space of all $p$-summing maps with the norm $\pi_p(\cdot)$.
It is well-known that we have the following isometry
	$$(X\otimes_{g_p}Y)^* \cong \Pi_{p'}(Y,X^*),\; A\otimes B \mapsto T$$
where $A\in X^*$, $B\in Y^*$ and
	$$Ty = \la y, B\ra A, \ \ (x\in X, y\in Y).$$
See \cite[Chapter 6]{Ryan} for the details of $p$-summing maps and Chevet-Saphar tensor norms.

A standard Banach space theory (\cite[Proposition 3.22]{Ryan} and \cite[Corollary 9.5]{Tom})
tells us that we have the following isometry
\begin{equation}\label{eq-1-summing-injective tensor}
	(\ell^1(G) \otimes_\eps \ell^1(G))^* \cong \Pi_1(\ell^1(G), \ell^\infty(G)), \; A\otimes B \mapsto S.
\end{equation}
One more standard fact we will use later is that the composition of two 2-summing maps is a 1-summing map (actually, a nuclear map). More precisely, let $T: X\to Y$ and $S:Y\to Z$ be 2-summing maps between Banach spaces, then $S\circ T$ is 1-summing with
	\begin{equation}\label{eq-1-summing}
	\pi_1(S\circ T) \le \pi_2(S)\pi_2(T).
	\end{equation}

We say that a Banach algebra $\A$ is a {\it $p$-summing algebra} if the algebra multiplication map $m$ extends to a bounded map
	$$m : \A\otimes_{g_p}\A \to \A.$$
	\begin{thm}({\bf Tonge}, \cite[Theorem 18.19]{DJT})
	Every 2-summing algebra is isomorphic to an operator algebra.
	\end{thm}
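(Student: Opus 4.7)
The plan is to turn the $g_2$-boundedness of multiplication into a Hilbert-space representation of $\A$ with bounded inverse on its image. The first step is to dualize the hypothesis. The Chevet--Saphar duality $(X \otimes_{g_2} Y)^* \cong \Pi_2(Y, X^*)$ specializes the assumption $\|m : \A \otimes_{g_2} \A \to \A\| \leq C$ to the statement that, for every $\phi \in \A^*$, the operator $T_\phi : \A \to \A^*$ given by $(T_\phi y)(x) = \phi(xy)$ is 2-summing with $\pi_2(T_\phi) \leq C\|\phi\|$. Pietsch's factorization theorem then furnishes, for each such $\phi$, a probability measure $\mu_\phi$ on the weak-$*$ closed unit ball of $\A^{**}$ such that
$$\|T_\phi y\|_{\A^*} \leq C\|\phi\|\left(\int |\langle \xi, y\rangle|^2\,d\mu_\phi(\xi)\right)^{1/2}.$$
This is the Hilbertian fingerprint built in by the 2-summing algebra hypothesis.

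The second step is to assemble these pointwise factorizations into an equivalent norm on $\A$ together with a compatible operator-space structure under which the multiplication becomes completely bounded on the Haagerup tensor product $\A \otimes_h \A$. A natural attempt is to embed $\A$ into an $\ell^\infty$-sum of the Hilbert spaces $L^2(\mu_\phi)$ via $y \mapsto (\langle \cdot, y\rangle)_\phi$, using the induced operator-space structure. The estimate \eqref{eq-1-summing}, asserting that the composition of two 2-summing maps is 1-summing, is then helpful in controlling iterated products $xyz$ through nuclear-type estimates, which is what makes a complete boundedness bound on $m$ feasible. Once complete boundedness is in hand, the Blecher--Ruan--Sinclair characterization of operator algebras delivers a completely bounded isomorphism $\rho : \A \to B(H)$ onto its image, which is exactly what is required.

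The main obstacle I anticipate is the uniformization step: passing from the pointwise family $\{\mu_\phi : \phi \in \A^*\}$ to a single Hilbert-space representation that simultaneously respects the algebra product and is bounded below on $\A$. This would likely require an ultraproduct construction together with a GNS-like argument applied to the positive-semidefinite kernels $(y,z) \mapsto \int \overline{\langle \xi, y\rangle}\langle \xi, z\rangle\,d\mu_\phi(\xi)$, or alternatively the clever introduction of a new equivalent norm on $\A$ that builds in multiplicativity and a two-sided norm equivalence at once. Packaging all of this simultaneously is essentially the technical heart of Tonge's theorem, and is the place where I expect the argument to demand the most care.
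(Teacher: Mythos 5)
The paper offers no proof of this statement to compare against: it is quoted directly from \cite[Theorem 18.19]{DJT} and used as a black box, so I can only assess your sketch on its own terms. Your first step is sound. The duality $(\A\otimes_{g_2}\A)^*\cong\Pi_2(\A,\A^*)$ does convert the hypothesis $\norm{m:\A\otimes_{g_2}\A\to\A}\le C$ into the statement that each slice $T_\phi\colon\A\to\A^*$, $(T_\phi y)(x)=\phi(xy)$, is $2$-summing with $\pi_2(T_\phi)\le C\norm{\phi}$, and Pietsch factorization applies to each $T_\phi$ (note, though, that the Pietsch measure for a $2$-summing map with domain $\A$ lives on the weak-$*$ compact unit ball of $\A^*$, not of $\A^{**}$).

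The remainder, however, is not a proof: the ``uniformization step'' you defer is the entire content of Tonge's theorem, and neither of the routes you gesture at is viable as described. The kernels $(y,z)\mapsto\int\overline{\langle\xi,y\rangle}\langle\xi,z\rangle\,d\mu_\phi(\xi)$ carry no trace of the multiplication, so a GNS-type construction built on them has no mechanism for producing a homomorphism; and the appeal to \eqref{eq-1-summing} to control iterated products is a non sequitur, since nuclearity of compositions plays no role in the standard arguments. What actually closes the gap is quantitative Pietsch factorization: each $T_\phi$ factors through a Hilbert space with $\gamma_2(T_\phi)\le\pi_2(T_\phi)$, so $\phi\circ m$ lies in $\Gamma_2(\A,\A^*)$, which is precisely the dual of $\mathrm{MAX}(\A)\otimes_h\mathrm{MAX}(\A)$ (the same identification the paper invokes in Proposition \ref{prop-T2-embed}). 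Hence the multiplication is bounded on the Haagerup tensor product of the maximal quantizations, and the isomorphic version of the Blecher--Ruan--Sinclair theorem (see \cite[Section 5.4]{BLe}) yields the representation --- this last step is itself nontrivial, as one must upgrade boundedness to complete boundedness and then build the Hilbert space from the uniformly bounded iterated multiplications $m^{(n)}\colon\A\otimes_h\cdots\otimes_h\A\to\A$ using associativity and injectivity of $\otimes_h$. That construction, or the equivalent direct one in \cite{DJT}, is exactly the piece your sketch is missing.
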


	\begin{cor}\label{C:injective-op alg}
	Every injective algebra is isomorphic to an operator algebra.
	\end{cor}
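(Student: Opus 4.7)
The plan is to reduce the corollary to Tonge's theorem by showing that every injective algebra is automatically a 2-summing algebra. The key ingredient is the tensor-norm comparison $\eps \le g_2$ on $\A \otimes \A$, which reflects the fact that the injective norm is the smallest of the standard reasonable tensor norms.

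First, I would verify $\eps \le g_2$ directly. For $u = \sum_{j=1}^n x_j \otimes y_j$ in $\A \otimes \A$ and functionals $\varphi, \psi \in B_{\A^*}$, H\"older's inequality gives
$$\left|\sum_{j=1}^n \varphi(x_j)\psi(y_j)\right| \le \left(\sum_{j=1}^n \abs{\varphi(x_j)}^2\right)^{1/2}\left(\sum_{j=1}^n \abs{\psi(y_j)}^2\right)^{1/2} \le \norm{(x_j)}_2 \cdot \norm{(y_j)}_2^w,$$
where the first factor is controlled by $\abs{\varphi(x_j)} \le \norm{x_j}$ for $\varphi$ in the unit ball and the second by the very definition of $\norm{\cdot}_2^w$. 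Taking the supremum over $\varphi, \psi \in B_{\A^*}$ and then the infimum over all representations of $u$ yields $\eps(u) \le g_2(u)$.

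Second, this inequality implies that the formal identity on $\A \otimes \A$ extends to a contraction $J : \A \otimes_{g_2} \A \to \A \otimes_\eps \A$. If $\A$ is injective, the multiplication extends to a bounded map $m : \A \otimes_\eps \A \to \A$, so the composition $m \circ J : \A \otimes_{g_2} \A \to \A$ is a bounded extension of the multiplication. Hence $\A$ is 2-summing, and Tonge's theorem delivers the required isomorphism with an operator algebra.

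The argument is short and essentially routine, so no step presents a genuine obstacle; the only thing to watch is that the H\"older estimate is set up asymmetrically, with one factor the ordinary $\ell^2$-norm of $(x_j)$ and the other the weak $\ell^2$-norm of $(y_j)$, which is precisely what the definition of $g_2$ requires. In fact the same proof shows more generally that every $p$-summing algebra with $1 \le p \le 2$ is isomorphic to an operator algebra.
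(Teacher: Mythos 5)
Your proof is correct and follows essentially the same route as the paper: establish that every injective algebra is a $2$-summing algebra via the comparison $\eps \le g_2$ (which the paper attributes to the minimality of the injective tensor norm and you verify by hand with Cauchy--Schwarz), and then invoke Tonge's theorem. One caveat on your closing aside: the claim that every $p$-summing algebra with $1\le p\le 2$ is an operator algebra does not follow from ``the same proof,'' since there one would need to compare $g_p$ with $g_2$ rather than with $\eps$; this does not affect the corollary itself.
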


\begin{proof}
Recall that the injective tensor product is the minimal among Banach space tensor products,
so that the formal identity $\A\otimes_{g_2}\A \to \A\otimes_\eps\A$ is a contraction for a Banach algebra $\A$.
Thus we can conclude that every injective algebra is a 2-summing algebra, which gives the conclusion we wanted.
\end{proof}
	
	\begin{defn}
	Let $m$ be the algebra multiplication of a Banach algebra $\A$. In the case that $\A$ is an injective algebra, we will denote
		$$\norm{m}_\eps := \norm{m : \A\otimes_\eps \A \to \A}.$$
	\end{defn}

We say a Banach algebra $\A$ is a {\it $Q$-algebra} if it is a quotient of a uniform algebra, which is automatically a commutative algebra. $Q$-algebras are characterized by a von Neumann type inequality \cite[Section 5.4.3(2)]{BLe}.

	\begin{thm}
	Let $\A$ be a commutative Banach algebra.
	Then $\A$ is isometrically isomorphic to a $Q$-algebra if and only if we have
		$$\|p(a_1,\ldots,a_n)\| \le \|p\|_\infty$$
	for any $n\in N$, $\{a_1,\ldots,a_n \}\subset A$ with norm $\le 1$ and every polynomial $p$ in $n$ variables without constant terms, where
		$$\|p\|_\infty=\sup\{|p(z_1,\ldots,z_n) \mid |z_i|\leq 1, i=1,\ldots,n \}.$$
	\end{thm}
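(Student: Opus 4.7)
The plan is to prove each direction separately. For necessity I use the structure of uniform algebras; for sufficiency I build an explicit uniform algebra that maps isometrically onto $\A$.

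\emph{Necessity.} Suppose $\A$ is isometrically isomorphic to $\U/I$ where $\U\subset C(X)$ is a uniform algebra on a compact Hausdorff space $X$. Fix $a_1,\ldots,a_n \in \A$ with $\|a_i\|\le 1$, a polynomial $p$ without constant term, and $\epsilon>0$. By the isometric quotient norm, choose lifts $b_i \in \U$ with $\|b_i\|_\infty \le 1+\epsilon$. Since $\U$ is a uniform algebra and $|b_i(x)|\le 1+\epsilon$ for every $x \in X$,
$$\|p(b_1,\ldots,b_n)\|_\infty \le \sup_{|z_i|\le 1+\epsilon}|p(z_1,\ldots,z_n)|.$$
Passing through the isometric quotient gives $\|p(a_1,\ldots,a_n)\|_\A \le \|p(b_1,\ldots,b_n)\|_\infty$; letting $\epsilon\to 0$ and using the continuity of $p$ yields $\|p(a_1,\ldots,a_n)\|_\A \le \|p\|_\infty$.

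\emph{Sufficiency.} I realize $\A$ as an isometric quotient of an explicit uniform algebra. Let $D := \overline{\mathbb{D}}^{B_\A}$, a product of closed unit discs indexed by the closed unit ball $B_\A$ of $\A$, which is compact Hausdorff in the product topology. For $a\in B_\A$ let $\zeta_a : D \to \overline{\mathbb{D}}$ denote the coordinate projection. Let $\U$ be the uniform closure in $C(D)$ of the (non-unital) subalgebra generated by $\{\zeta_a : a\in B_\A\}$. On the dense polynomial subalgebra, define a homomorphism $\pi$ by $\pi(\zeta_a) := a$ and extend multiplicatively and linearly. The hypothesis says exactly that $\pi$ is contractive on polynomials without constant term with respect to the sup-norm, so it extends to a contractive surjective homomorphism $\pi : \U \to \A$. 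The induced map $\U/\ker\pi \to \A$ has norm $\le 1$; since each $a$ in $B_\A$ has the lift $\zeta_a$ of sup-norm $\le 1$, the reverse inequality holds as well. Hence $\A$ is isometrically isomorphic to the $Q$-algebra $\U/\ker\pi$.

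The essential subtlety lies in the sufficiency direction: one must verify that $\pi$ is well-defined on the free commutative polynomial algebra without constant term, and that the hypothesis yields exactly the sup-norm estimate required to extend $\pi$ across the uniform closure. That the polynomials must omit the constant term is essential, since $\A$ may lack an identity. The full argument within the operator-algebraic framework is given in \cite[Section 5.4.3(2)]{BLe}, from which the theorem may be quoted directly.
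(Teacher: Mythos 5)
The paper does not prove this theorem at all: it is quoted verbatim from Blecher--Le Merdy \cite[Section 5.4.3(2)]{BLe} (the result goes back to Davie and Varopoulos), so there is no ``paper proof'' to match. Your argument is a correct, self-contained rendering of the classical proof. The necessity direction (lift with an $\epsilon$ of slack, evaluate the polynomial pointwise on the lifts, push through the isometric quotient, let $\epsilon\to 0$) is exactly right, and the sufficiency direction via the polydisc $D=\overline{\mathbb{D}}^{B_\A}$ with coordinate functions $\zeta_a$ is the standard construction; your verification that the induced map $\U/\ker\pi\to\A$ is isometric (contractive by hypothesis, and with contractive inverse because $\zeta_a$ is a norm-one lift of each $a\in B_\A$) is complete. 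Two small points worth making explicit: well-definedness of $\pi$ on the polynomial subalgebra follows because a polynomial in finitely many \emph{distinct} coordinates vanishing on a polydisc has all coefficients zero, so the subalgebra generated by the $\zeta_a$ is genuinely free and $\sup_D|P(\zeta_{a_1},\ldots,\zeta_{a_n})|=\|p\|_\infty$; and your $\U$ is non-unital, so if one insists (as some authors do) that a uniform algebra contain the constants, one should observe that $\U$ coincides with the maximal ideal $\{f : f(0)=0\}$ of the unital closed algebra $\widetilde\U$ generated by the $\zeta_a$ and $1$, and that $\ker\pi$ is in fact an ideal of $\widetilde\U$ --- this is precisely why the constant term must be excluded from the polynomials, as you note. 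Neither point is a gap, only a place where a referee would ask for a sentence.
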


Motivated by the above we give the following definition.

	\begin{defn}\label{D: multi-variable-von Neumann inequality}
	Let $A$ be a commutative Banach algebra. Then $A$ is said to satisfy
	{\it multi-variable ($\delta, L$)-von Neumann inequality} provided that for
	every $n\in N$, every set of $n$ elements $\{a_1,\ldots,a_n \}\subset A$ with
	$\|a_i\|\leq \delta$ $(i=1,\ldots,n$), and every polynomial $p$ in $n$ variables without constant terms, we have
		$$\|p(a_1,\ldots,a_n)\| \leq L\|p\|_\infty.$$
	\end{defn}

Every commutative injective algebras are $Q$-algebras (\cite{V}). Actually, a commutative Banach algebra is an injective algebra if and only if it is isomorphic to a quotient of a uniform algebra by a complemented ideal (\cite{V2}).
A more qualitative result can be found in \cite{BLe} using a modern language of operator spaces.
	\begin{thm}\label{thm-inj-VN}({\bf Blecher/Le Merdy}, \cite[Theorem 5.4.5, Corollary 5.4.11]{BLe})
	Let $\A$ be a commutative injective algebra with the multiplication map $m$.
	Then $\A$ satisfies the multi-variable $(\delta, L)$-von Neuman inequality with
		$$\delta = \frac{1}{(1+\norm{m}_\eps) e}\;\; \text{and}\;\; L=1.$$
	\end{thm}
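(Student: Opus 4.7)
The plan is to invoke Varopoulos's theorem that a commutative injective algebra is isomorphic to a $Q$-algebra, and then to refine this identification into a norm comparison with explicit constants. By Varopoulos's theorem $\A$ carries an equivalent norm $\|\cdot\|'$ making $(\A,\|\cdot\|')$ a $Q$-algebra, so the scalar polynomial von Neumann inequality $\|p(b_1,\ldots,b_n)\|' \leq \|p\|_\infty$ holds whenever $\|b_i\|' \leq 1$ and $p$ has no constant term. In particular, by the chain used in Corollary~\ref{C:injective-op alg}, the starting data is that $\A$ is a $2$-summing algebra with multiplication norm at most $\|m\|_\eps$, which is what feeds into the construction of $\|\cdot\|'$.

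The substantive step is the quantitative construction: $\|\cdot\|'$ must be chosen to satisfy both $\|\cdot\| \leq \|\cdot\|'$ and $\|\cdot\|' \leq (1+\|m\|_\eps)e\,\|\cdot\|$. Within the Blecher--Le~Merdy operator-space framework one takes $\|\cdot\|'$ to be the maximal $Q$-algebra seminorm dominating $\|\cdot\|$. Iterating the injective multiplication gives the natural bound $\|m^{(k)}:\A^{\otimes_\eps k}\to \A\| \leq \|m\|_\eps^{k-1}$ for the $k$-fold product. Combining these bounds with a multi-variable Cauchy integral representation of $p(a_1,\ldots,a_n)$ on a polydisk of optimally chosen radius (carried out in the unitization of $\A$) produces the factor $(1+\|m\|_\eps)e$: the constant $e$ emerges from the optimization of the contour radius near $r=1/e$, where one balances coefficient growth against the decay of monomial norms, while the prefactor $(1+\|m\|_\eps)$ absorbs the unitization and the $k=1$ base case of the induction.

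Once such a $\|\cdot\|'$ is available, the conclusion is immediate: for $\|a_i\| \leq \delta = 1/((1+\|m\|_\eps)e)$ one has $\|a_i\|' \leq (1+\|m\|_\eps)e\cdot\delta = 1$, so the $Q$-algebra inequality yields $\|p(a_1,\ldots,a_n)\|' \leq \|p\|_\infty$, and the domination $\|\cdot\| \leq \|\cdot\|'$ gives $\|p(a_1,\ldots,a_n)\| \leq \|p\|_\infty$, i.e.\ $L=1$. The main obstacle is thus the middle step: the qualitative chain injective $\Rightarrow$ $2$-summing $\Rightarrow$ $Q$-algebra is classical, but pinning down the explicit constant $(1+\|m\|_\eps)e$ in the norm comparison and, crucially, guaranteeing the \emph{dominating} inequality $\|\cdot\|\leq \|\cdot\|'$ (so that the outer constant $L$ is exactly $1$ rather than a larger number carrying over from a two-sided similarity) requires the full operator-space machinery of Blecher--Le~Merdy together with the optimal choice of the holomorphic functional-calculus contour.
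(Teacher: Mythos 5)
First, a point of reference: the paper offers no proof of this statement at all --- it is quoted from Blecher--Le~Merdy --- so your proposal can only be measured against the standard argument behind that citation (essentially Davie--Varopoulos--Tonge). Your overall architecture is the right one: split $p=\sum_{k\ge 1}p_k$ into homogeneous parts (with $\norm{p_k}_\infty\le\norm{p}_\infty$ by averaging over rotations $z\mapsto e^{i\theta}z$), use the iterated bound $\norm{m^{(k)}:\A^{\otimes_\eps k}\to\A}\le\norm{m}_\eps^{k-1}$ (valid because $\otimes_\eps$ is a uniform crossnorm), pick up a factor $e^k$ in degree $k$, and sum a geometric series which, with $\delta=1/((1+\norm{m}_\eps)e)$, equals exactly $1$, giving $L=1$. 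The detour through an equivalent $Q$-algebra norm $\norm{\cdot}'$ is unnecessary and somewhat circular: the only way to manufacture a two-sided comparison with constant $(1+\norm{m}_\eps)e$ is to prove the von~Neumann inequality first, so nothing is gained by routing through it.

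The genuine gap is the step you leave as a black box, and the mechanism you name for it would fail. The factor $e$ does not come from optimizing a Cauchy contour radius. Any route through coefficient estimates $|c_\alpha|\le\norm{p}_\infty r^{-|\alpha|}$ plus the triangle inequality founders because $\sum_{|\alpha|=k}|c_\alpha|$ can grow like $n^{k/2}$, and a resolvent-integral representation of $p(a_1,\dots,a_n)$ over an $n$-torus in the unitization produces a product of $n$ resolvent norms; either way the constant depends on $n$, which is fatal for an inequality that must hold for every $n$. The dimension-free bound comes instead from polarization: writing the homogeneous part $p_k$ as a symmetric tensor $T_k\in(\ell^1_n)^{\otimes_\eps k}$, its injective tensor norm is the supremum of the polarized symmetric $k$-linear form over the polydisk, which is at most $\frac{k^k}{k!}\norm{p_k}_\infty\le e^k\norm{p_k}_\infty$ --- this polarization constant is the sole source of the $e$. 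Then, with $v:\ell^1_n\to\A$, $e_i\mapsto a_i$, of norm $\le\delta$, one has $p_k(a_1,\dots,a_n)=m^{(k)}\bigl(v^{\otimes k}(T_k)\bigr)$ and hence $\norm{p_k(a_1,\dots,a_n)}\le\norm{m}_\eps^{k-1}(e\delta)^k\norm{p}_\infty$; summing over $k\ge 1$ with $e\delta=1/(1+\norm{m}_\eps)$ gives total constant exactly $1$. Without replacing your Cauchy-integral step by this polarization/injective-tensor-norm identification, the proof does not go through.
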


\subsection{Weighted group algebras}\label{S:weighted}

Let $G$ be a group, and let $\om : G \to (0,\infty)$ be a weight on $G$, i.e.
	$$\om(xy)\leq \om(x)\om(y),\;\; (x,y\in G).$$
The {\it weighted group algebra} $\ell^1(G,\om)$ is the convolution algebra of functions
$f$ on $G$ such that $\|f\|_{\ell^1(G,\om)}=\sum_{x\in G} |f(x)|\om(x) < \infty$.
Using the natural duality $\ell^1(G)^*=\ell^\infty(G)$, we can show that $\ell^1(G,\om)^*=\ell^\infty(G,\om^{-1})$, where
$$\ell^\infty(G,\om^{-1})=\{ \varphi \mid \varphi \om^{-1} \in \ell^\infty(G) \}$$
with
$$ \|\varphi\|_{\ell^\infty(G,\om^{-1})}=\|\varphi \om^{-1}\|_\infty.$$

In Section \ref{S:algebras}, we showed that every injective Banach algebra is isomorphic to an operator algebra. As we will show in Theorem \ref{thm-op-alg} below, the converse of the preceding statement is also true in the case of weighted group algebras. However, this requires some operator space knowledge including the Haagerup tensor product $\otimes_h$ of operator spaces. We refer the reader to \cite{BLe} or \cite{Pis1} for references. We first recall the following form of the celebrated Grothendieck's theorem.

	\begin{thm}\label{GT}
	If we equip $\ell^1(G)$ with its MAX operator space structure, then the formal identity $id: \ell^1(G)\otimes_\eps \ell^1(G) \to \ell^1(G)\otimes_h \ell^1(G)$ has norm $\le K_G$, where $K_G$ is the Grothendieck's constant.
	\end{thm}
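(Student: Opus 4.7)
The statement is a form of Grothendieck's inequality in the operator-space language, and my plan is to prove it by dualizing both sides and reducing to a factorization statement about operators $\ell^1(G)\to\ell^\infty(G)$.

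First I would identify the duals of both tensor products. By \eqref{eq-1-summing-injective tensor}, $(\ell^1(G) \otimes_\eps \ell^1(G))^* \cong \Pi_1(\ell^1(G), \ell^\infty(G))$ isometrically. For the Haagerup side I would invoke the standard operator-space duality $(X \otimes_h Y)^* \cong CB(X, Y^*)$, together with the identification $\mathrm{MAX}(\ell^1(G))^* = \mathrm{MIN}(\ell^\infty(G))$. A concrete analysis of the row and column norms in MAX operator spaces (via the universal property that every contraction out of a MAX space is completely contractive into any operator space) then identifies $CB(\mathrm{MAX}(\ell^1(G)), \mathrm{MIN}(\ell^\infty(G)))$ with the space $\Gamma_2(\ell^1(G),\ell^\infty(G))$ of operators $\ell^1(G)\to\ell^\infty(G)$ that factor through a Hilbert space, equipped with the factorization norm $\gamma_2$.

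Taking transposes, the theorem then reduces to bounding by $K_G$ the natural inclusion
$$\Gamma_2(\ell^1(G), \ell^\infty(G)) \hookrightarrow \Pi_1(\ell^1(G), \ell^\infty(G)).$$
This is a direct consequence of Grothendieck's theorem: if $T = B \circ A$ with $A \colon \ell^1(G) \to H$ and $B \colon H \to \ell^\infty(G)$ bounded, then Grothendieck's theorem asserts that $A$ is automatically $1$-summing with $\pi_1(A) \le K_G \|A\|$; composing with the bounded map $B$ and using \eqref{eq-1-summing} (or rather its one-sided analogue) gives $\pi_1(T) \le \|B\|\,\pi_1(A) \le K_G \|A\|\|B\|$, and taking the infimum over all such factorizations yields $\pi_1(T) \le K_G \gamma_2(T)$, as required.

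The main obstacle is the operator-space bookkeeping in the first step: identifying the Haagerup dual of $\mathrm{MAX}(\ell^1(G))\otimes_h \mathrm{MAX}(\ell^1(G))$ with the $\gamma_2$-factorization space (and not with some other, naively larger, predual) requires careful use of the row/column structure of MAX, together with the Haagerup-duality formula. This identification, and the resulting operator-space reformulation of Grothendieck's theorem with sharp constant $K_G$, is classical and is treated in detail in Pisier's operator-space monograph \cite{Pis1}.
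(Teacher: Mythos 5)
Your overall strategy --- dualize both tensor products, identify $(\ell^1(G)\otimes_\eps\ell^1(G))^*$ with $\Pi_1(\ell^1(G),\ell^\infty(G))$ and $(\ell^1(G)\otimes_h\ell^1(G))^*$ with $\Gamma_2(\ell^1(G),\ell^\infty(G))$, and then deduce $\pi_1(T)\le K_G\,\gamma_2(T)$ from the $L^1$-to-Hilbert-space form of Grothendieck's theorem --- is exactly the content of the references the paper cites for Theorem \ref{GT}, and these two dual identifications are precisely the ones the paper itself writes out later in the proof of Proposition \ref{prop-T2-embed}, citing \cite[Proposition 5.16]{Pis1}. The second half of your argument (the one-sided ideal property $\pi_1(B\circ A)\le\norm{B}\pi_1(A)$, the infimum over factorizations, and passing back to the preduals using density of the algebraic tensor product) is correct.

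However, the duality formula you invoke on the Haagerup side is wrong, and the step as written would fail. The identity $(X\otimes_h Y)^*\cong CB(X,Y^*)$ is the duality for the operator space \emph{projective} tensor product $X\prt Y$, not for $\otimes_h$: the dual of $X\otimes_h Y$ consists of the multiplicatively (Christensen--Sinclair) completely bounded bilinear forms, i.e.\ maps $X\to Y^*$ admitting a suitable factorization through row and column Hilbert spaces. Moreover, $CB(\mathrm{MAX}(\ell^1(G)),\mathrm{MIN}(\ell^\infty(G)))$ is not $\Gamma_2(\ell^1(G),\ell^\infty(G))$: since every bounded map out of a MAX space, and every bounded map into a MIN space, is automatically completely bounded with the same norm, that $CB$-space is all of $B(\ell^1(G),\ell^\infty(G))$, which is strictly larger than $\Gamma_2(\ell^1(G),\ell^\infty(G))$ when $G$ is infinite. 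Had you followed your stated identifications literally, you would be led to claim $\pi_1(T)\le K_G\norm{T}$ for \emph{every} bounded $T:\ell^1(G)\to\ell^\infty(G)$, i.e.\ that the injective and projective norms on $\ell^1(G)\otimes\ell^1(G)$ are equivalent, which is false. The identification you actually need, $\bigl(\mathrm{MAX}(\ell^1(G))\otimes_h\mathrm{MAX}(\ell^1(G))\bigr)^*\cong\Gamma_2(\ell^1(G),\ell^\infty(G))$ isometrically, is true and is \cite[Proposition 5.16]{Pis1} (see also \cite[(1.47), (A.7)]{BLe}); with that substitution in place of your two intermediate claims, the rest of your argument goes through and recovers the theorem with the constant $K_G$.
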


\begin{proof}
See \cite[(1.47), (A.7)]{BLe} and \cite[(3.11)]{Pis3}.
\end{proof}

\begin{thm}\label{thm-op-alg}
Let $G$ be a group, and let $\om$ be a weight on $G$. Then $\ell^1(G,\om)$ is an injective Banach algebra
if and only if it is isomorphic to an operator algebra.
\end{thm}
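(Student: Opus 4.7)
The implication ``injective $\Rightarrow$ isomorphic to an operator algebra'' is already supplied by Corollary \ref{C:injective-op alg}, so only the reverse direction requires proof: assuming a Banach algebra isomorphism $\phi : \ell^1(G,\om) \to A$ onto a closed subalgebra $A$ of some $B(H)$, the task is to show that $m : \ell^1(G,\om) \otimes_\eps \ell^1(G,\om) \to \ell^1(G,\om)$ is bounded. The plan is to factor $m$ through the Haagerup tensor product $\otimes_h$ using the operator algebra structure pulled back from $A$, and then to invoke Grothendieck's theorem in the form of Theorem \ref{GT} to pass from $\otimes_\eps$ to $\otimes_h$.

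To activate operator-space machinery, equip $\ell^1(G,\om)$ with its MAX operator space structure. The universal property of MAX -- any bounded linear map out of $X_{\max}$ into an arbitrary operator space is automatically completely bounded with equal cb norm -- upgrades $\phi$ to a cb map $(\ell^1(G,\om), \text{MAX}) \to A$ with $\|\phi\|_{cb} = \|\phi\|$, while $\phi^{-1} : A \to \ell^1(G,\om)$ is merely bounded. Since $A$ is a subalgebra of $B(H)$, its multiplication $m_A : A \otimes_h A \to A$ is completely contractive (the classical fact that operator multiplication on $B(H)$ is completely contractive on the Haagerup tensor product). Functoriality of $\otimes_h$ under cb maps gives a bounded map $\phi \otimes \phi : \ell^1(G,\om) \otimes_h \ell^1(G,\om) \to A \otimes_h A$ of cb norm at most $\|\phi\|^2$, so the composition
\[
\ell^1(G,\om) \otimes_h \ell^1(G,\om) \xrightarrow{\phi \otimes \phi} A \otimes_h A \xrightarrow{m_A} A \xrightarrow{\phi^{-1}} \ell^1(G,\om)
\]
produces a bounded multiplication $m : \ell^1(G,\om) \otimes_h \ell^1(G,\om) \to \ell^1(G,\om)$ of norm at most $\|\phi\|^2 \|\phi^{-1}\|$, with $\otimes_h$ computed from the MAX structures.

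It remains to transfer Theorem \ref{GT} from $\ell^1(G)$ to $\ell^1(G,\om)$. The map $f \mapsto f\om$ is an isometric Banach space isomorphism between the two, hence a complete isometry of their MAX operator space structures (MAX depends only on the Banach space norm), and it intertwines both $\otimes_\eps$ (a Banach space tensor norm) and $\otimes_h$ (functorial with respect to complete isometries). Theorem \ref{GT} therefore yields that the formal identity $\ell^1(G,\om) \otimes_\eps \ell^1(G,\om) \to \ell^1(G,\om) \otimes_h \ell^1(G,\om)$ has norm at most $K_G$. Composing with the Haagerup-level multiplication above delivers a bounded map $m : \ell^1(G,\om) \otimes_\eps \ell^1(G,\om) \to \ell^1(G,\om)$, so $\ell^1(G,\om)$ is an injective algebra.

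The main obstacle is the conceptual one of extracting operator-space content from a purely Banach-algebraic hypothesis: MAX is the crucial device that promotes the bounded but a priori non-cb isomorphism $\phi$ to a cb map, thereby making the Haagerup tensor product available. Once that is in place, Grothendieck's theorem handles the passage from $\otimes_\eps$ to $\otimes_h$, and the weight $\om$ enters only through the harmless isometry $f \mapsto f\om$.
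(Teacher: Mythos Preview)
Your proof is correct and follows essentially the same approach as the paper's own argument: equip $\ell^1(G,\om)$ with the MAX operator space structure, use the universal property of MAX to make the isomorphism $\phi$ completely bounded, pull back the completely contractive Haagerup multiplication from the operator algebra, and then invoke Theorem~\ref{GT} (transferred via the isometry $f\mapsto f\om$) to pass from $\otimes_\eps$ to $\otimes_h$. Your write-up is in fact slightly more explicit than the paper's about why each step is justified, but the strategy and ingredients are identical.
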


\begin{proof}
The necessary part has been proven in Corollary \ref{C:injective-op alg}. For the sufficient
part, suppose that there is an operator algebra $B\subseteq B(H)$ and a bounded algebra isomorphism
 $\psi: \ell^1(G,\om) \to B$. This, in particular, implies that $\psi: \ell^1(G,\om) \to B$ is completely bounded
 when $\ell^1(G,\om)$ is given its MAX operator space structure. Now since from \cite[Theorem 2.3.2]{BLe}, the multiplication map $m: B\otimes_h B \to B$ is completely contractive, we have the following bounded map:
$$\psi^{-1}\circ m \circ (\psi \otimes \psi) : \ell^1(G,\om) {\otimes}_h \ell^1(G,\om) \to \ell^1(G,\om)$$
But it is easy to see that $\psi^{-1}\circ m \circ (\psi \otimes \psi)$ is exactly the multiplication map
	$$m: \ell^1(G,\om) {\otimes}_h \ell^1(G,\om) \to \ell^1(G,\om).$$
On the other hand, since the mapping 
$$\ell^1(G,\om) \to \ell^1(G) \ , \ f \mapsto f\om,$$
is a complete isometric surjection (here we have again given MAX operator space structure to both
$\ell^1(G,\om)$ and $\ell^1(G)$), it follows from the Grothendieck's theorem (Theorem \ref{GT}) that the formal identity $id: \ell^1(G,\om) {\otimes}_\eps \ell^1(G,\om) \to \ell^1(G,\om) {\otimes}_h \ell^1(G,\om)$ has norm $\le K_G$. This implies that the multiplication map $\ell^1(G,\om)\otimes_\eps \ell^1(G,\om) \to \ell^1(G,\om)$ is bounded, and so,  $\ell^1(G,\om)$ is injective.

\end{proof}

\subsection{Littlewood multiplier}\label{S:Littlewood mult}

Let $G$ be a (discrete) group. We let the space of {\it Littlewood multipliers}, denoted by $T^2(G)$,
to be all the functions $f : G\times G \to \C$ for which there are functions $f_1 , f_2 : G \times G \to \C$ such that
	$$f(s,t)=f_1(s,t)+f_2(s,t) \ \ \ \ (s,t \in G),$$
and
	$$ \sup_{t\in G} \sum_{s\in G} |f_1(s,t)|^2 <\infty \ \  , \ \   \sup_{s\in G} \sum_{t\in G}  |f_2(s,t)|^2<\infty.$$
We equip this space with the norm
	$$\|f\|_{T_2(G)}=\inf \left\{ \sup_{t\in G} (\sum_{s\in G} |f_1(s,t)|^2)^{1/2} + \sup_{s\in G} (\sum_{t\in G}
	|f_2(s,t)|^2)^{1/2} \right\},$$
where infimum is taken over all possible decompositions. Note that the term ``Littlewood functions" have been used for $T^2(G)$ in the literature, but we would like to use the term ``Littlewood multipliers" instead since it explains the meaning of $T^2(G)$ better.

It follows easily that $T^2(G)$, with the action of pointwise multiplication, is a symmetric Banach $\ell^\infty(G\times G)$-module. Indeed, we have the following contraction.
	\begin{equation}\label{eq-T2-module}
	\ell^\infty(G\times G) \otimes_\gamma T^2(G) \to T^2(G),\; f\otimes g \mapsto fg,
	\end{equation}
where $\otimes_\gamma$ is the projective tensor product of Banach spaces. Moreover, we have the following bounded embedding which is well-known to experts but we have presented its proof for the sake of completeness.

%We should point out that the following proposition is proven for countable groups for simplicity. However, the result is true in general. This fact does not conflict with our eventual goal which is to study when weighted group algebras are isomorphic to operator algebras because we can get Arens regularity of the group algebra only if the group is countable \cite[Corollary 8.13(ii)]{DL}.

	\begin{prop}\label{prop-T2-embed}
	Let $G$ be a discrete group, and let $I : T_2(G) \to (\ell^1(G) \otimes_\eps \ell^1(G))^*$ be the formal identity.
	Then we have
		$$\norm{I}\le K_G.$$
	\end{prop}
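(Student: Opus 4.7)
The plan is to work via the isometric identification $(\ell^1(G)\otimes_\eps\ell^1(G))^*\cong\Pi_1(\ell^1(G),\ell^\infty(G))$ recorded in \eqref{eq-1-summing-injective tensor}. A function $f:G\times G\to\C$ corresponds under this isometry to the operator $T_f:\ell^1(G)\to\ell^\infty(G)$ with $T_fy(s)=\sum_t f(s,t)y(t)$, and the functional norm of $I(f)$ equals $\pi_1(T_f)$. So the proposition reduces to the operator-ideal estimate $\pi_1(T_f)\le K_G\|f\|_{T^2(G)}$.

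Given $\eps>0$, I would pick a decomposition $f=f_1+f_2$ with $\alpha:=\sup_t\|f_1(\cdot,t)\|_{\ell^2}$ and $\beta:=\sup_s\|f_2(s,\cdot)\|_{\ell^2}$ satisfying $\alpha+\beta\le\|f\|_{T^2(G)}+\eps$. Split $T_f=T_{f_1}+T_{f_2}$ and factor each summand through the Hilbert space $\ell^2(G)$. For $T_{f_1}$, Minkowski's inequality in $\ell^2$ together with the column control shows that $C_1:\ell^1(G)\to\ell^2(G)$, $C_1y(s):=\sum_t f_1(s,t)y(t)$, is bounded by $\alpha$; composing with the contractive formal identity $J:\ell^2(G)\hookrightarrow\ell^\infty(G)$ gives $T_{f_1}=J\circ C_1$. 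For $T_{f_2}$ the roles reverse: the contractive formal identity $J':\ell^1(G)\hookrightarrow\ell^2(G)$, followed by $D_2:\ell^2(G)\to\ell^\infty(G)$, $D_2y(s):=\sum_t f_2(s,t)y(t)$ (bounded by $\beta$ via Cauchy--Schwarz and the row control), gives $T_{f_2}=D_2\circ J'$.

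The key input is Grothendieck's theorem in the operator form: every bounded operator $\ell^1\to\ell^2$ is $1$-summing with $\pi_1(\cdot)\le K_G\|\cdot\|$. Applied to $C_1$ and to $J'$, and combined with the ideal property $\pi_1(AS)\le\|A\|\,\pi_1(S)$ of $1$-summing maps, this yields $\pi_1(T_{f_1})\le K_G\alpha$ and $\pi_1(T_{f_2})\le K_G\beta$. Adding the two estimates, taking the infimum over decompositions, and letting $\eps\downarrow 0$ completes the proof.

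I do not foresee any genuine obstacles. The factorizations through $\ell^2(G)$ are essentially forced by which coordinate carries the $\ell^2$-control, and Grothendieck's theorem supplies the constant $K_G$ in one stroke. The only bookkeeping point is to match the convention in \eqref{eq-1-summing-injective tensor} so that $T_f$ is indeed the operator with matrix $f(s,t)$; once that is fixed, the argument is a direct application of GT on each of the two factorizations.
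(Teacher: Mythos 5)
Your argument is correct and follows essentially the same route as the paper: identify $(\ell^1(G)\otimes_\eps\ell^1(G))^*$ with $\Pi_1(\ell^1(G),\ell^\infty(G))$, split $f=f_1+f_2$ according to the column/row $\ell^2$-control, factor each summand through $\ell^2(G)$, and invoke Grothendieck's theorem to produce the constant $K_G$. The only (cosmetic) difference is which standard form of Grothendieck's theorem is quoted: you use the operator form ($\pi_1(u)\le K_G\|u\|$ for every bounded $u:\ell^1\to\ell^2$) together with the ideal property of $\pi_1$, whereas the paper uses the equivalent tensor-norm form $\pi_1(T)\le K_G\,\gamma_2(T)$ for maps $\ell^1\to\ell^\infty$ via the Haagerup tensor product.
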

\begin{proof}
For simplicity, we write $\ell^1$ instead of $\ell^1(G)$, $\ell^2$ instead of $\ell^2(G)$ and $\ell^\infty$ instead of $\ell^\infty(G)$.
We first note that since $\ell^2$ is reflexive, we have the following isometric isomorphisms
\begin{align}\label{Eq1:prop-T2-embed}
B(\ell^1, \ell^2)\cong (\ell^1\otimes^\gamma \ell^2)^*\cong (\ell^1(\ell^2))^*  \cong \ell^\infty(\ell^2),
\end{align}
where $\ell^1(\ell^2)$ and $\ell^\infty(\ell^2)$ are Banach spaces of $\ell^2$-valued 1-summable functions and bounded functions, respectively. Now
let $f_1 : G \times G \to \C$ be a function with
$$\alpha:=\sup_{t\in G} (\sum_{s\in G} |f_1(s,t)|^2)^{1/2} <\infty.$$
Then, by (\ref{Eq1:prop-T2-embed}), the associated linear map $u : \ell^1 \to \ell^2,\; g \mapsto u(g)$ given by
	$$u(g)(t) = \sum_{s\in G}g(s)f_1(s,t)$$
has the norm $\norm{u} = \alpha$ and $I(f_1)$ corresponds to $id_{2,\infty}\circ u$, where $id_{2,\infty}:\ell^2 \to \ell^\infty$ is the formal identity. Now we recall that
	$$(\ell^1(G)\otimes_\eps\ell^1(G))^* \cong \Pi_1(\ell^1(G), \ell^\infty(G))$$
and
	$$(\ell^1(G)\otimes_h \ell^1(G))^*\cong \Gamma_2(\ell^1(G), \ell^\infty(G)),$$
the space of 2-factorable operators, as Banach spaces (\cite[Proposition 5.16]{Pis1}, \cite[Chapter 7]{DJT}). Then, by Grothendieck's theorem (Theorem \ref{GT}), we have
	$$\|I(f_1)\|=\pi_1(id_{2,\infty}\circ u) \le K_G \cdot \gamma_2(id_{2,\infty} \circ u) \le K_G \cdot \alpha,$$
where $\gamma_2(\cdot)$ is the 2-factorable norm.
Similarly, for $f_2: G \times G \to \C$ with
	$$\beta:=\sup_{s\in G} (\sum_{t\in G} |f_1(s,t)|^2)^{1/2} <\infty$$
we get
	$\|I(f_2)\|\le K_G \cdot \beta,$
which gives the desired result.
\end{proof}

\subsection{Groups with polynomial growth}\label{S:Groups poly. growth}

Let $G$ be a finitely generated group with a fixed finite symmetric generating set $F$ with the identity of the group $G$.
$G$ is said to have {\it polynomial growth} if there exists a polynomial $f$ such that
	$$|F^n|\leq f(n) \ \ \ (n\in \N).$$
Here $|S|$ is the cardinality of any $S\subseteq G$ and
	$$F^n=\{u_1\cdots u_n : u_i\in F, i=1,\ldots, n \},$$
where $\{u_i\}^n_{i=1}$ is the set of generators. The least degree of any polynomial satisfying the above relation is called {\bf the order of growth} of $G$ and it is denoted by $d(G)$. It can be shown that the order of growth of $G$ does not depend on the symmetric generating set $F$, i.e. it is a universal constant for $G$.

It is immediate that finite groups are of polynomial growth. More generally, every $G$ with the property that the conjugacy class of every element in $G$ is finite has polynomial growth \cite[Theorem 12.5.17]{Pal}. Also every nilpotent group (hence an abelian group) has polynomial growth \cite[Theorem 12.5.17]{Pal}.
A deep result of M. Gromov \cite{Grom} states that every finitely generated group with polynomial
growth is virtually nilpotent i.e.\ it has a nilpotent subgroup of finite index. Moreover, there is a polynomial $f$ and a constant $0<\lambda \leq 1$ such that
\begin{align}\label{Eq:stric poly growth}
\lambda f(n) \leq |F^n| \leq f(n) \ \ \text{for all}\ \  n\in \N,
\end{align}
where $\text{deg}\, f=d(G)$. If we further assume that $G$ is nilpotent, then by
the {\bf Bass-Guivarch formula} (\cite{B}, \cite{Gui}), we can actually compute the order of growth
of $G$. More precisely, let $G$ be a finitely generated nilpotent group with lower central series
$$G = G_1 \supseteq G_2 \supseteq \ldots \supseteq G_m=\{e\}.$$
In particular, the quotient group $G_k/G_{k+1}$ is a finitely generated abelian group. Then
the order of growth of G is
\begin{align}\label{Eq:Bass-Guivarch formula}
d(G) = \sum_{k =1}^{m-1} k \ \operatorname{rank}(G_k/G_{k+1}),
\end{align}
where rank denotes the rank of an abelian group, i.e.\ the largest number of independent and torsion-free elements of the abelian group.

Using the generating set $F$ of $G$ we can define a {\it length function} $\tau_F : G \to [0, \infty)$ by
\begin{align}\label{Eq:length function}
\tau_F(x)=\inf \{n\in \N : x\in F^n \} \ \ \text{for} \ \ x \neq e, \ \ \tau_F(e)=0.
\end{align}
When there is no fear of ambiguity, we write $\tau$ instead of $\tau_F$.
It is straightforward to verify that $\tau$ is a subadditive function on $G$, i.e.
\begin{align}\label{Eq:lenght func-trai equality}
\tau(xy)\leq \tau(x)+\tau(y) \ \ \ \ (x,y\in G).
\end{align}
Note that since $F$ is symmetric, for every $x\in G$, $\tau(x)=\tau(x^{-1})$. If we combine this fact
with (\ref{Eq:lenght func-trai equality}), then a straightforward calculation
shows that
\begin{align}\label{Eq:lenght func-trai equality-double side}
|\tau(x)-\tau(y)|\leq \tau(xy)\leq \tau(x)+\tau(y) \ \ \ \ (x,y\in G).
\end{align}
We can use $\tau$ to define various weights on $G$.
More precisely, for every $0\leq \alpha \leq 1$, $\beta \geq 0$, and $C>0$,
we can define the {\it polynomial weight} $\om_\beta$ on $G$ of order $\beta$ by
\begin{align}\label{Eq:poly weight-defn}
\om_\beta(x)=(1+\tau(x))^\beta  \ \ \ \ (x\in G),
\end{align}
and the {\it exponential weight} $\sg_{\alpha, C}$ on $G$ of order $(\alpha, C)$ by
\begin{align}\label{Eq:Expo weight-defn}
\sg_{\alpha,C}(x)=e^{C\tau(x)^\alpha} \ \ \ \ (x\in G).
\end{align}

\section{Weighted group algebras isomorphic to operator algebras}

In this section, we will use $G$ to denote a finitely generated infinite group with polynomial growth.
$F$ is a fixed symmetric generating set of $G$ with the identity and $f$ and $\lambda$ refer to the polynomial and the constant satisfying \eqref{Eq:stric poly growth}.

\subsection{The case of polynomial weights}\label{S:Poly. weight-poly growth}

For some weight $\om : G \to (\delta,\infty)$ with $\delta >0$, we would like to check whether $\ell^1(G,\om)$ is an injective algebra. In order to do that we recall the co-multiplication
	$$\Gamma : \ell^\infty(G) \to \ell^\infty(G\times G),\; f\mapsto \Gamma f$$
with $\Gamma f (s,t) = f(st)$, $s,t \in G$.
Let $\Gamma_{\om} : \ell^\infty(G,\om^{-1}) \to \ell^\infty(G\times G,\om^{-1} \times \om^{-1})$ be the extension of $\Gamma$ to $\ell^\infty(G,\om^{-1})$.
Consider the isometries
	$$P: \ell^\infty(G) \to \ell^\infty(G,\om^{-1}), \; f \mapsto f\om$$
and
	$$R: \ell^\infty(G\times G,\om^{-1} \times \om^{-1}) \to \ell^\infty(G\times G),\; F\mapsto F\cdot (\om^{-1} \times \om^{-1}).$$

We define the operator $\tilde{\Gamma} : \ell^\infty(G) \to \ell^\infty(G\times G)$ so that the following diagram commutes:
\[
\xymatrix{
\ell^\infty(G,\om^{-1}) \ar@<.5ex>[rr]^{\Gamma_{\om}}
& & \ell^\infty(G\times G,\om^{-1} \times \om^{-1})  \ar[d]^{R}   \\
\ell^\infty(G) \ar@<.5ex>[u]^{P}  \ar@<.5ex>[rr]^{\tilde{\Gamma}}
& & \ell^\infty(G\times G)  }
\]
Hence
	\begin{equation}\label{eq-modified-comulti}
	\tilde{\Gamma}(f)=\Om \Gamma(f) \ \ (f\in \ell^\infty(G)),
	\end{equation}
where
\begin{align}\label{Eq:weight-littlewood mult}
\Om:=\frac{\Gamma (\om)}{\om \times \om}.
\end{align}
Now $\ell^1(G,\om)$ is an injective algebra if and only if
the multiplication map
	$$m : \ell^1(G,\om)\otimes_\eps \ell^1(G,\om) \to \ell^1(G,\om)$$
is bounded, or equivalently, $\tilde{\Gamma}$ extends to a bounded map
	$$\tilde{\Gamma} : \ell^\infty(G) \to (\ell^1(G) \otimes_\eps \ell^1(G))^*.$$
Note that we have
	$$\norm{m}_\eps = \norm{\tilde{\Gamma}}.$$
An application of Littlewood multiplier argument gives the following positive results on
the weighted group algebra $\ell^1(G,\om_\beta)$,
where $\om_\beta$ is the polynomial weight defined in (\ref{Eq:poly weight-defn}).

\begin{thm}\label{T:ploy weight-operator alg}
$\ell^1(G,\om_\beta)$ is an injective algebra
if one of the following conditions holds:\\
$(i)$ $\lambda=1$ and $\beta> \frac{d(G)}{2}$;\\
$(ii)$ $0< \lambda < 1$ and $\beta> \frac{d(G)+1}{2}$.\\
Moreover, we have
\begin{align}\label{Eq:poly weight-Hag norm}
\norm{m}_\eps \le K_G\min \{1,2^{\beta-1}\}\left[1+\sum_{n=1}^\infty \frac{f(n)-\lambda f(n-1)}{(1+n)^{2\beta}} \right]^{1/2}.
\end{align}

\end{thm}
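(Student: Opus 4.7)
The plan is to use the setup laid out just before the theorem: by the observation that $\|m\|_\eps = \|\tilde\Gamma\|$, it suffices to show that $\tilde\Gamma$ extends boundedly to $\ell^\infty(G) \to (\ell^1(G) \otimes_\eps \ell^1(G))^*$. Since $\tilde\Gamma(f) = \Om \cdot \Gamma(f)$ with $\|\Gamma(f)\|_{\ell^\infty(G\times G)} = \|f\|_\infty$, I will prove that $\Om \in T^2(G)$. Then the $\ell^\infty(G\times G)$-module property \eqref{eq-T2-module} gives $\|\Om\cdot\Gamma(f)\|_{T^2(G)} \le \|\Om\|_{T^2(G)}\|f\|_\infty$, and the embedding of Proposition~\ref{prop-T2-embed} upgrades this to the desired dual space with an extra factor $K_G$, producing both injectivity and the shape $\|m\|_\eps \le K_G \|\Om\|_{T^2(G)}$ of the claimed estimate.

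The heart of the argument is therefore to exhibit an explicit Littlewood decomposition $\Om = \Om_1 + \Om_2$. The natural split uses the length function: set $\Om_1 := \Om \cdot \mathbf{1}_{\{\tau(s) < \tau(t)\}}$ and $\Om_2 := \Om \cdot \mathbf{1}_{\{\tau(s) \ge \tau(t)\}}$. On the support of $\Om_1$, the triangle inequality \eqref{Eq:lenght func-trai equality} gives $1+\tau(st) \le 1+\tau(s)+\tau(t) \le 2(1+\tau(t))$, hence $(1+\tau(st))^\beta \le 2^\beta(1+\tau(t))^\beta$, so that $\Om_1(s,t) \le 2^\beta/(1+\tau(s))^\beta$. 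Symmetrically $\Om_2(s,t) \le 2^\beta/(1+\tau(t))^\beta$. This reduces the $T^2$-norm estimate to controlling the sum $S := \sum_{s\in G}(1+\tau(s))^{-2\beta}$.

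Next I will bound $S$ using polynomial growth. Grouping by level sets of $\tau$ and using $|\{s : \tau(s) = n\}| = |F^n| - |F^{n-1}|$ together with $|F^n| \le f(n)$ and $|F^{n-1}| \ge \lambda f(n-1)$, I obtain
\[
S \le 1 + \sum_{n=1}^\infty \frac{f(n) - \lambda f(n-1)}{(1+n)^{2\beta}}.
\]
When $\lambda = 1$, $f(n) - f(n-1)$ behaves like $n^{d(G)-1}$, giving convergence exactly when $\beta > d(G)/2$; when $\lambda < 1$, $f(n) - \lambda f(n-1)$ behaves like $n^{d(G)}$, requiring $\beta > (d(G)+1)/2$. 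These match cases (i) and (ii). Combining everything yields $\|m\|_\eps \le C_\beta\, K_G\, S^{1/2}$ for some constant $C_\beta$, and injectivity of $\ell^1(G,\om_\beta)$ follows.

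The main obstacle is sharpening the constant to the claimed $\min\{1, 2^{\beta-1}\}$: the crude splitting above produces a factor of order $2^{\beta+1}$ rather than $\min\{1, 2^{\beta-1}\}$, and matching the theorem's constant will require a more subtle decomposition. I expect one needs to exploit the sharper pointwise bound $(1+\tau(st))^\beta \le \max\{1, 2^{\beta-1}\}\,((1+\tau(s))^\beta + (1+\tau(t))^\beta)$, which encodes the different behavior of $x \mapsto x^\beta$ for $\beta \le 1$ versus $\beta \ge 1$, and then use a convex-combination decomposition $\Om = \Om\cdot\phi_\beta(s)/(\phi_\beta(s)+\psi_\beta(t)) + \Om\cdot\psi_\beta(t)/(\phi_\beta(s)+\psi_\beta(t))$ (with $\phi_\beta(s) = (1+\tau(s))^{-\beta}$, $\psi_\beta(t) = (1+\tau(t))^{-\beta}$) so that each summand already inherits the right one-variable decay. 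Treating the regimes $\beta \le 1$ and $\beta > 1$ separately along these lines should recover the stated constant; the rest of the estimate is essentially the routine sum bound described above.
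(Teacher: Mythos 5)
Your proposal is correct and follows essentially the same route as the paper: majorize $\Om_\beta(s,t)$ pointwise by $A_\beta\bigl((1+\tau(s))^{-\beta}+(1+\tau(t))^{-\beta}\bigr)$, deduce $\Om_\beta\in T^2(G)$, pass to $(\ell^1(G)\otimes_\eps\ell^1(G))^*$ via Proposition \ref{prop-T2-embed} and the module property \eqref{eq-T2-module}, and bound $\sum_{s\in G}(1+\tau(s))^{-2\beta}$ by grouping over level sets of $\tau$ and invoking \eqref{Eq:stric poly growth}, exactly as you describe. The ``main obstacle'' you identify is not a real one: the paper's own proof uses precisely the elementary inequality $(a+b)^\beta\le A_\beta(a^\beta+b^\beta)$ that you propose in your last paragraph, whose correct constant is $A_\beta=\max\{1,2^{\beta-1}\}$, and the $\min$ appearing in \eqref{Eq:poly weight-Hag norm} is a typo (for $0<\beta<1$ the inequality with constant $\min\{1,2^{\beta-1}\}=2^{\beta-1}<1$ already fails at $b=0$, and for $\beta>1$ the constant $1$ fails for $a=b$ large); indeed, your two-piece decomposition honestly yields $2A_\beta S^{1/2}$, and the paper's stated bound also silently drops the factor $2$ coming from adding the two terms of the Littlewood decomposition, so you should not expend effort trying to reach the printed constant.
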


\begin{proof}
Let $\Om_\beta:=\frac{\Gamma (\om_\beta)}{\om_\beta \times \om_\beta}$.
We will first show that $\Om_\beta \in T^2(G)$.
For every $x,y\in G$, we have
\begin{eqnarray*}
\Om_\beta(x,y) &=& \frac{\om_\beta(xy)}{\om_\beta(x)\om_\beta(y)} \\
&=& \frac{(1+\tau(xy))^\beta}{(1+\tau(x))^\beta(1+\tau(y))^\beta} \\
&\leq & \frac{(1+\tau(x)+\tau(y))^\beta}{(1+\tau(x))^\beta(1+\tau(y))^\beta} \\
&\leq & \frac{A_\beta[(1+\tau(x))^\beta+(1+\tau(y))^\beta]}{(1+\tau(x))^\beta(1+\tau(y))^\beta} \ \ \ (*)\\
&=& \frac{A_\beta}{(1+\tau(x))^\beta}+ \frac{A_\beta}{(1+\tau(y))^\beta},
\end{eqnarray*}
where $A_\beta =\min \{1,2^{\beta-1}\}$ and the inequality ($*$) follows
from the classical inequality
$$(a+b)^\beta \leq A_\beta (a^\beta+b^\beta) \ \ \ (a,b\geq 0).$$
Hence there is the function $u\in \ell^\infty(G\times G)$ with $\|u\|_\infty \leq 1$ such that
$$\Om_\beta(x,y)=u(x,y)\left[\frac{A_\beta}{(1+\tau(x))^\beta}+ \frac{A_\beta}{(1+\tau(y))^\beta}\right] \ \ \ (x,y\in G).$$
Thus, by the definition of $T^2(G)$ and \eqref{eq-T2-module},
\begin{align}\label{Eq:poly weight-littlewood bound}
\|\Om_\beta\|_{T^2(G)} \leq A_\beta\left(\sum_{x\in G} \frac{1}{(1+\tau(x))^{2\beta}}\right)^{1/2}.
\end{align}
Hence it suffices to see when $\sum_{x\in G} \displaystyle \frac{1}{(1+\tau(x))^{2\beta}}$ is finite.
To see this, from our hypothesis and (\ref{Eq:stric poly growth}), we have
\begin{eqnarray*}
\sum_{x\in G} \frac{1}{(1+\tau(x))^{2\beta}} &=&
\sum_{n=0}^\infty \sum_{\tau(x)=n} \frac{1}{(1+n)^{2\beta}} \\
&= &  1+\sum_{n=1}^\infty \sum_{x\in F^n \setminus F^{n-1}} \displaystyle \frac{1}{(1+n)^{2\beta}} \\
&\leq & 1+\sum_{n=1}^\infty \frac{f(n)-\lambda f(n-1)}{(1+n)^{2\beta}},
\end{eqnarray*}
where the series in the last line converges if $\lambda =1$ and $2\beta> d$ or
$0< \lambda < 1$ and $2\beta> d+1$. Moreover, in either case, we have
$$\sum_{x\in G} \frac{1}{(1+\tau(x))^{2\beta}} \leq 1+\sum_{n=1}^\infty \frac{f(n)-\lambda f(n-1)}{(1+n)^{2\beta}}.$$
Hence, by Proposition \ref{prop-T2-embed} and \eqref{eq-T2-module},
\begin{align*}
\norm{\tilde{\Gamma}(f)}_{ (\ell^1(G) \otimes_\eps \ell^1(G))^*}
&\le K_G\norm{\tilde{\Gamma}(f)}_{T^2(G)}\\
&\le K_G\norm{\Om_\beta}_{T^2(G)} \norm{\Gamma(f)}_\infty\\
&\le K_GA_\beta \left[1+\sum_{n=1}^\infty \frac{f(n)-\lambda f(n-1)}{(1+n)^{2\beta}}\right]^{1/2}\norm{f}_\infty
\end{align*}
for any $f\in \ell^\infty(G)$.

\end{proof}

\subsection{The case of exponential weights}\label{S:Exp. weight-poly growth}

In this section we will study when the weighted group algebra $\ell^1(G,\sg_{\alpha, C})$ is an injective algebra,
where $\sg_{\alpha, C}$ is the exponential weight defined in (\ref{Eq:Expo weight-defn}).
If we consider the same additional function $\Om =\frac{\Gamma (\om)}{\om \times \om}$,
then it is not clear this time whether we can split the function into two parts with a suitable square summability.
However, we can majorize the function with a similar one coming from a polynomial weight.
Let us begin with a technical lemma.

\begin{lem}\label{L:estimate poly-expo weight}
Let $0<\alpha<1$, $C>0$ and take $\beta \geq \displaystyle \max\left\{1, \frac{6}{C\alpha(1-\alpha)} \right\}$.
Define the functions $p: [0,\infty) \to \Real$ and $q : (0,\infty)\to \Real$ by
\begin{align}\label{Eq:estimate poly-expo weight}
p(x)=Cx^\alpha-\beta \ln (1+x) \ \ , \ \ q(x)=\frac{p(x)}{x}.
\end{align}
Then $p$ is increasing and $q$ is decreasing on $\displaystyle  \left[\left(\frac{\beta^2}{C\alpha(1-\alpha)}\right)^{1/\alpha} , \infty \right)$.
\end{lem}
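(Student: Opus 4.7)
The plan is a direct derivative analysis. I will write $M = C\alpha(1-\alpha)$ and $x_0 = (\beta^2/M)^{1/\alpha}$, so that $Mx_0^\alpha = \beta^2$; the two hypotheses then read $\beta\geq 1$ and $M\beta\geq 6$.

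For the monotonicity of $p$, I will compute $p'(x) = C\alpha x^{\alpha-1} - \beta/(1+x)$ and use the bound $(1+x)x^{\alpha-1}\geq x^\alpha$ to reduce $p'(x)\geq 0$ to $C\alpha x^\alpha\geq \beta$. At $x_0$ this reads $\beta^2/(1-\alpha)\geq\beta$, which holds from $\beta\geq 1$, and it propagates to $[x_0,\infty)$ since $x\mapsto x^\alpha$ is increasing.

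For the monotonicity of $q$, I will use the identification: $q'\leq 0$ is equivalent to $\phi(x):=p(x)-xp'(x)\geq 0$, where a short calculation gives
$$\phi(x) = C(1-\alpha)x^\alpha - \beta\left(\ln(1+x)-\frac{x}{1+x}\right).$$
The plan is to prove (a) $\phi'\geq 0$ on $[x_0,\infty)$ and (b) $\phi(x_0)\geq 0$, which together give $\phi\geq 0$ on $[x_0,\infty)$. Step (a) parallels the argument for $p$: $\phi'(x) = Mx^{\alpha-1} - \beta x/(1+x)^2\geq 0$ is equivalent to $M(1+x)^2\geq \beta x^{2-\alpha}$, and using $(1+x)^2\geq x^2$ it reduces to $Mx^\alpha\geq\beta$, which holds at $x_0$ by $Mx_0^\alpha = \beta^2\geq\beta$ and propagates by monotonicity of $x\mapsto x^\alpha$.

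The main obstacle is step (b). Discarding the nonnegative term $x_0/(1+x_0)$ from $\phi(x_0) = \beta^2/\alpha - \beta\ln(1+x_0) + \beta x_0/(1+x_0)$ reduces the goal to $\beta/\alpha\geq \ln(1+x_0)$. If $x_0\leq 1$ this is immediate since $\ln(1+x_0)\leq\ln 2<1\leq \beta/\alpha$. For $x_0\geq 1$, I will bound $\ln(1+x_0)\leq \ln 2+\ln x_0 = \ln 2+(2\ln\beta-\ln M)/\alpha$ and multiply through by $\alpha$, reducing to $\beta\geq \alpha\ln 2 + 2\ln\beta - \ln M$; invoking $M\beta\geq 6$ to write $-\ln M\leq \ln\beta - \ln 6$ and $\alpha\ln 2\leq \ln 2$ then collapses the goal to the one-variable inequality $\beta - 3\ln\beta + \ln 3\geq 0$ on $[1,\infty)$, which follows from $\min_{\beta\geq 1}(\beta - 3\ln\beta) = 3-3\ln 3$ and hence $3-2\ln 3>0$. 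This is precisely where the constant $6$ in the hypothesis is consumed; a smaller constant would not leave enough margin in this last estimate.
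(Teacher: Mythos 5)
Your proof is correct and follows essentially the same route as the paper's: both reduce $p'\geq 0$ to $C\alpha x^\alpha\geq\beta$, and both handle $q'\leq 0$ by introducing an auxiliary function (your $\phi=p-xp'$ is, up to sign and the harmless term $\beta x/(1+x)$, the paper's $h(x)=C(\alpha-1)x^\alpha+\beta\ln(1+x)$), proving its monotonicity on $[x_0,\infty)$ from $C\alpha(1-\alpha)x^\alpha\geq\beta$, and checking the sign at the endpoint $x_0=(\beta^2/C\alpha(1-\alpha))^{1/\alpha}$ via the same key inequality $\ln(1+x_0)\leq\beta/\alpha$. The only divergence is how that last inequality is verified --- the paper uses $1+x_0\leq(1+\beta^3/3!)^{1/\alpha}\leq e^{\beta/\alpha}$, while you minimize $\beta-3\ln\beta$ over $\beta\geq 1$ --- and both consume the hypothesis $C\alpha(1-\alpha)\beta\geq 6$ at exactly this point.
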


\begin{proof}
We have
$$p'(x)=C\alpha x^{\alpha-1}-\frac{\beta}{1+x}=\frac{C\alpha x^{\alpha-1}+C\alpha x^\alpha-\beta}{1+x}.$$
Hence $p'(x)\geq 0$ if $C\alpha x^{\alpha}-\beta \geq 0$. This implies that
\begin{align}\label{Eq:1}
p\ \text{is increasing on}\  \left[\left(\frac{\beta}{C\alpha}\right)^{1/\alpha} , \infty \right).
\end{align}
Now consider $q(x)=\displaystyle Cx^{\alpha-1}-\frac{\beta \ln (1+x)}{x}$. Then
\begin{align}\label{Eq:2}
q'(x)= \frac{C(\alpha-1) x^{\alpha}-\displaystyle\frac{\beta x}{1+x}+\beta \ln (1+x)}{x^2}=
\frac{h(x)-\displaystyle\frac{\beta x}{1+x}}{x^2},
\end{align}
where
$$h(x):=C(\alpha-1) x^{\alpha}+\beta \ln (1+x).$$
Hence, in order to find an interval for which $q'(x)\leq 0$, it suffices to see when
$h(x)\leq 0$. We have
$$ h'(x)= \frac{C\alpha (\alpha-1) x^{\alpha}+C\alpha (\alpha-1) x^{\alpha-1}+\beta}{1+x}.$$
Thus if we put
$$C_1=C\alpha (1-\alpha),$$
then $h'(x)\leq 0$ whenever $-C_1x^\alpha+\beta \leq 0$, or equivalently, $x\geq \displaystyle (\frac{\beta}{C_1})^{1/\alpha}$.
Hence
\begin{align}\label{Eq:3}
h\ \text{is decreasing on}\  \left[\left(\frac{\beta}{C_1}\right)^{1/\alpha} , \infty \right).
\end{align}
Now since, by hypothesis, $\beta \geq 1$, we have
	$$\displaystyle (\frac{\beta^2}{C_1})^{1/\alpha} \geq (\frac{\beta}{C_1})^{1/\alpha},$$
and so by (\ref{Eq:3}),
	$$h(x)\leq h((\frac{\beta^2}{C_1})^{1/\alpha}) \ \ \text{whenever}\ \ x\geq  (\frac{\beta^2}{C_1})^{1/\alpha}.$$
This implies that if $x\geq \displaystyle (\frac{\beta^2}{C_1})^{1/\alpha}$, then
\begin{eqnarray*}
h(x) &\leq & C(\alpha-1) \frac{\beta^2}{C_1}+\beta \ln (1+(\frac{\beta^2}{C_1})^{1/\alpha}) \\
&=&   \beta[\ln (1+(\frac{\beta^2}{C_1})^{1/\alpha})-\frac{\beta}{\alpha}].
\end{eqnarray*}
On the other hand, since $\beta \geq \displaystyle \frac{6}{C\alpha(1-\alpha)}=\frac{6}{C_1}$, we have
$\displaystyle \frac{\beta^2}{C_1} \leq \frac{\beta^3}{6}$. Hence, considering the fact that $1/\alpha >1$,
\begin{eqnarray*}
1+ (\frac{\beta^2}{C_1})^{1/\alpha}
& \leq & 1+(\frac{\beta^3}{3!})^{1/\alpha} \\
& \leq & \left(1+\frac{\beta^3}{3!}\right)^{1/\alpha} \\
&\leq & \left(\sum_{n=0}^\infty \frac{\beta^n}{n!}\right)^{1/\alpha}\\
&=& \displaystyle e^{\frac{\beta}{\alpha}}.
\end{eqnarray*}
Therefore
$$\ln (1+(\frac{\beta^2}{C_1})^{1/\alpha})-\frac{\beta}{\alpha} \leq 0.$$
Hence $ h(x)\leq 0$ if $x\geq  (\frac{\beta^2}{C_1})^{1/\alpha}$. By (\ref{Eq:2})
\begin{align}\label{Eq:4}
 q(x)\ \ \text{is decreasing on}\ \ \left[\left(\frac{\beta^2}{C_1}\right)^{1/\alpha}, \infty \right).
\end{align}
The final result follows from (\ref{Eq:1}) and the fact that $\displaystyle (\frac{\beta^2}{C_1})^{1/\alpha} \geq
(\frac{\beta}{C\alpha})^{1/\alpha}$.
\end{proof}

\begin{thm}\label{T:Expo weight-bounded-Poly weight}
Suppose that $0<\alpha<1$, $C>0$, and $\beta \geq \displaystyle \max\left \{1, \frac{6}{C\alpha(1-\alpha)} \right \}$.
Let $p$ and $q$ be the functions defined in (\ref{Eq:estimate poly-expo weight})
and consider the function $\om : G \to (0,\infty)$ defined by
$$\om(x)=e^{p(\tau(x))}=e^{\tau(x)q(\tau(x))} \ \ \ (x\in G).$$
Then
$$\om(xy)\leq M\om(x)\om(y)  \ \ \ \ (x,y\in G),$$
where
\begin{align}\label{Eq:bound-poly expo weight}
M=\max \{ e^{p(t)-p(s)-p(r)}: t,s,r \in [0,4K]\cap \Z \}.
\end{align}
and
\begin{align}\label{Eq:increasing-decreasing interval}
K=\left(\frac{\beta^2}{C\alpha(1-\alpha)}\right)^{1/\alpha}.
\end{align}
\end{thm}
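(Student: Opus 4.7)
The plan is to reduce $\om(xy)\le M\om(x)\om(y)$ to the pointwise inequality $p(t)-p(s)-p(r)\le \ln M$, where $t=\tau(xy)$, $s=\tau(x)$, $r=\tau(y)$ are non-negative integers satisfying $|s-r|\le t\le s+r$ by the subadditivity and symmetry of $\tau$. Two book-keeping observations: the triple $(0,0,0)$ lies in $[0,4K]\cap\Z$ with $p(0)=0$, so $\ln M\ge 0$; and setting $P:=\max p$ and $p_m:=\min p$ over $[0,4K]\cap\Z$, we have $p_m\le p(0)=0\le P$ and $\ln M=P-2p_m$.

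The two structural facts I would extract from Lemma \ref{L:estimate poly-expo weight} are that $p$ is increasing on $[K,\infty)$ and that $p$ is subadditive there, i.e.\ $p(a+b)\le p(a)+p(b)$ for $a,b\ge K$. The subadditivity is free from the lemma: since $q=p/x$ is decreasing on $[K,\infty)$, for $a,b\ge K$ we get $p(a+b)=aq(a+b)+bq(a+b)\le aq(a)+bq(b)=p(a)+p(b)$. Set $K':=\lceil K\rceil$; under the standing assumption on $\beta$ it lies in $[0,4K]\cap\Z$, so $p(K')\in[p_m,P]$, and every integer $n\ge K$ satisfies $p(n)\ge p(K')$ by monotonicity.

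The remainder is a case analysis, WLOG $s\ge r$. \emph{Case I} ($s,r,t\le 4K$) is immediate from the definition of $M$. \emph{Case II} ($r\ge K$, hence $s\ge K$): subadditivity gives $p(s+r)\le p(s)+p(r)$; if $t\ge K$ monotonicity yields $p(t)\le p(s+r)\le p(s)+p(r)$, so $p(t)-p(s)-p(r)\le 0\le \ln M$, while if $t<K$ then $p(t)\le P$ and $p(s),p(r)\ge p(K')$ give $p(t)-p(s)-p(r)\le P-2p(K')\le P-2p_m=\ln M$. \emph{Case III} ($r<K$): if also $s<K$ then $t\le s+r<2K\le 4K$, returning us to Case I, so $s\ge K$. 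Since $r<K\le K'$, we have $s+r\le s+K'$ with both sides $\ge K$, and monotonicity followed by subadditivity gives $p(s+r)\le p(s+K')\le p(s)+p(K')$. Hence when $t\ge K$, $p(t)-p(s)-p(r)\le p(K')-p(r)\le P-p_m\le \ln M$; when $t<K$, the direct estimate $p(t)\le P$, $p(s)\ge p(K')$, $p(r)\ge p_m$ yields $p(t)-p(s)-p(r)\le P-p(K')-p_m\le P-2p_m=\ln M$, where in the last step we used $p(K')\ge p_m$.

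The main obstacle is precisely the boundary regime in which $r$ or $t$ slips below the threshold $K$: there monotonicity of $p$ is unavailable and subadditivity does not apply directly. The device that absorbs it cleanly is the constant $\ln M=P-2p_m$ built from the extrema of $p$ on the integer box $[0,4K]\cap\Z$: thanks to $p_m\le 0$, it is generous enough to accommodate two replacements of a small $p$-value by its integer extremum. A minor technicality is that $K$ need not be an integer; I would handle it by working with $K'=\lceil K\rceil$, since $p$ remains increasing and subadditive on $[K',\infty)\subseteq[K,\infty)$.
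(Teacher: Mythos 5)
Your overall strategy coincides with the paper's: reduce the claim to the scalar inequality $p(t)-p(s)-p(r)\le\ln M$ for $t=\tau(xy)$, $s=\tau(x)$, $r=\tau(y)$ with $|s-r|\le t\le s+r$, and run a case analysis on which of these lie above or below the threshold $K$, using precisely the two consequences of Lemma \ref{L:estimate poly-expo weight} that the paper uses: monotonicity of $p$ on $[K,\infty)$ and the subadditivity $p(a+b)\le p(a)+p(b)$ for $a,b\ge K$ extracted from the decrease of $q$ (your derivation of the latter is verbatim the computation in the paper's Case IV). The reformulation $\ln M=P-2p_m$ with $p_m\le 0\le P$ is correct, and Case I as well as the sub-case $t\ge K$ of Case II are sound.

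There is, however, a genuine gap: your assertion that $K'=\lceil K\rceil$ lies in $[0,4K]\cap\Z$ does not follow from the standing assumption on $\beta$, and it fails exactly when $K<1/4$. This regime is realizable: the hypotheses only force $\beta\ge\max\{1,6/(C\alpha(1-\alpha))\}$, so taking $C\alpha(1-\alpha)\ge 6$ one may take $\beta=1$ and get $K=(1/(C\alpha(1-\alpha)))^{1/\alpha}$ as small as one likes; indeed the paper's own worked example in Section 5 (with $C=6/(\alpha(1-\alpha))$, $\beta=1$) has $K<1/6$ and $M=1$. In that regime $[0,4K]\cap\Z=\{0\}$, hence $P=p_m=p(0)=0$ and $\ln M=0$, while $p(K')=p(1)=C-\beta\ln 2>0$. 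Consequently the inequality $p(K')\le P$, which you invoke in Case III (sub-case $t\ge K$) to pass from $p(K')-p(r)$ to $P-p_m$, is false, and the companion inequality $p(K')\ge p_m$ used in both sub-cases with $t<K$ is no longer justified by membership of $K'$ in the indexing set. The theorem's conclusion is still true there --- when $K<1$ one has $r<K\Rightarrow r=0\Rightarrow y=e$, so Case III degenerates to the identity case, and the branch $t<K$ of Case II reduces to showing $p(n)\ge 0$ for integers $n\ge 1$, which holds because $K<1$ forces $C>4\beta^2>\beta\ln 2$ and $p$ is increasing on $[1,\infty)$ --- but these observations are not in your argument. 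You need either to treat the regime $\lceil K\rceil>4K$ separately along these lines, or to replace the appeal to ``$p(K')\in[p_m,P]$'' by a direct verification, since as stated it is false in general.
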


\begin{proof}
By Lemma \ref{L:estimate poly-expo weight},
$p$ is increasing and $q$ is decreasing on $[K,\infty)$.
We will prove the statement of the theorem considering various
cases:\\
{\it Case I:} $\max\{ \tau(x), \tau(y)\}\leq 2K$. In this case, $\tau(xy)\leq \tau(x)+\tau(y)\leq 4K$.
Hence
$$\frac{\om(xy)}{\om(x)\om(y)}=e^{p(\tau(xy))-p(\tau(x))-p(\tau(y))} \leq M.$$
{\it Case II:} $\max\{ \tau(x), \tau(y)\}>  2K$ and $\min\{ \tau(x), \tau(y)\}\leq  K$.
Without loss of generality, we can assume that $\tau(x)>2K$ and $\tau(y)\leq K$.
Then, by (\ref{Eq:lenght func-trai equality-double side}),
$$\tau(x)+\tau(y)\geq \tau(xy)\geq \tau(x)-\tau(y)\geq 2K-K=K.$$
Thus, by Lemma \ref{L:estimate poly-expo weight},
\begin{eqnarray*}
\om(xy) & = & e^{p(\tau(xy))} \\
& \leq &  e^{p(\tau(x)+\tau(y))} \\
&=& e^{(\tau(x)+\tau(y))q(\tau(x)+\tau(y))}\\
&=& e^{\tau(x)q(\tau(x)+\tau(y))}e^{\tau(y)q(\tau(x)+\tau(y))}\\
&\leq& e^{\tau(x)q(\tau(x))}e^{Kq(K)}\\
&=& \om(x)\om(y)e^{p(K)-p(\tau(y))} \\
&\leq & M \om(x)\om(y).
\end{eqnarray*}
{\it Case III:} $\min\{ \tau(x), \tau(y)\}> K$ and $\tau(xy)\leq K$.
In this case, we have
\begin{eqnarray*}
\om(x)\om(y) & = &  e^{p(\tau(x))+p(\tau(y))} \\
& \geq & e^{2p(K)} \\
&=& e^{2p(K)-p(\tau(xy))}\om(xy) \\
&\geq & \frac{1}{M} \om(xy).
\end{eqnarray*}
Hence
$$ \om(xy)\leq M \om(x)\om(y).$$
{\it Case IV:} $\min\{ \tau(x), \tau(y), \tau(xy)\}> K$.
In this case, by Lemma \ref{L:estimate poly-expo weight}, we have
\begin{eqnarray*}
\om(xy) & = & e^{p(\tau(xy))} \\
& \leq &  e^{p(\tau(x)+\tau(y))} \\
&=& e^{(\tau(x)+\tau(y))q(\tau(x)+\tau(y))}\\
&=& e^{\tau(x)q(\tau(x)+\tau(y))}e^{\tau(y)q(\tau(x)+\tau(y))}\\
&\leq& e^{\tau(x)q(\tau(x))}e^{\tau(y)q(\tau(y)}\\
&\leq& \om(x)\om(y).
\end{eqnarray*}
Therefore by comparing the above four cases and considering the fact that
$M\geq e^{-p(0)}=1$, it follows that for
every $x,y\in G$,
$$\om(xy)\leq M\om(x)\om(y).$$
\end{proof}

We are now ready to show when the weighted group algebras of exponential weights are injective algebras.

\begin{thm}\label{T:Expo weight-2-summing-alg}
Suppose that $0< \alpha <1$ and $C>0$. Then $\ell^1(G,\sg_{\alpha, C})$ is a 2-summing algebra.
Moreover, we have
\begin{align}\label{Eq:Expo weight-g_2-norm}
\norm{m}_\eps \le K_G M2^{\beta-1}\left [1+\sum_{n=1}^\infty \frac{f(n)-\lambda f(n-1)}{(1+n)^{2\beta}}\right]^{1/2},
\end{align}
where
	$$\beta = \displaystyle \max\left \{1, \frac{6}{C\alpha(1-\alpha)}, \frac{d+(1-\delta_1(\lambda))}{2} \right\}$$
($\delta_1$ is the Dirac function at 1)
and $M$ is the constant (depending on $\alpha$, $\beta$ and $C$) defined in (\ref{Eq:bound-poly expo weight}).
\end{thm}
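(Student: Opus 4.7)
The plan is to reduce the exponential-weight case to the polynomial case handled in Theorem \ref{T:ploy weight-operator alg} by exploiting the factorization of $\sg_{\alpha,C}$ implicit in the function $p$ of Lemma \ref{L:estimate poly-expo weight}. First I would choose $\beta$ exactly as in the statement and verify that this single value simultaneously satisfies the hypothesis $\beta \geq \max\{1, 6/(C\alpha(1-\alpha))\}$ of Lemma \ref{L:estimate poly-expo weight} (so that Theorem \ref{T:Expo weight-bounded-Poly weight} applies) and the convergence condition ($2\beta > d(G)$ when $\lambda = 1$, and $2\beta > d(G)+1$ when $0 < \lambda < 1$) ensuring that the series in \eqref{Eq:poly weight-Hag norm} is finite; the third term in the maximum defining $\beta$ is designed precisely for this latter condition.

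Next, I would introduce the auxiliary function $\varrho : G \to (0,\infty)$ defined by $\varrho(x) := e^{p(\tau(x))}$. Since $p(t) = Ct^\alpha - \beta \ln(1+t)$, one obtains the pointwise factorization $\sg_{\alpha,C}(x) = \varrho(x)\, \om_\beta(x)$ on $G$, and Theorem \ref{T:Expo weight-bounded-Poly weight} applied to $\varrho$ gives $\varrho(xy) \leq M \varrho(x)\varrho(y)$, with $M$ the constant from \eqref{Eq:bound-poly expo weight}. Writing $\Om_\sg := \Gamma(\sg_{\alpha,C})/(\sg_{\alpha,C} \times \sg_{\alpha,C})$ and $\Om_\beta := \Gamma(\om_\beta)/(\om_\beta \times \om_\beta)$, the factorization yields
\[
\Om_\sg(x,y) = \frac{\varrho(xy)}{\varrho(x)\varrho(y)} \cdot \frac{\om_\beta(xy)}{\om_\beta(x)\om_\beta(y)} \leq M \cdot \Om_\beta(x,y)
\]
pointwise on $G \times G$. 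Because $T^2(G)$ is a contractive $\ell^\infty(G\times G)$-module by \eqref{eq-T2-module}, this transfers to $\|\Om_\sg\|_{T^2(G)} \leq M \|\Om_\beta\|_{T^2(G)}$, and the estimate for the right-hand side obtained inside the proof of Theorem \ref{T:ploy weight-operator alg} (together with $A_\beta \leq 2^{\beta-1}$, valid since $\beta \geq 1$) furnishes a Littlewood-multiplier bound for $\Om_\sg$ matching the right-hand side of \eqref{Eq:Expo weight-g_2-norm} up to the factor $K_G$.

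Finally, I would replay the closing step of the proof of Theorem \ref{T:ploy weight-operator alg}, now applied to the weight $\sg_{\alpha,C}$: the diagram preceding \eqref{eq-modified-comulti} identifies $\tilde\Gamma(g) = \Om_\sg \cdot \Gamma(g)$, so Proposition \ref{prop-T2-embed} combined with the module action \eqref{eq-T2-module} gives $\|\tilde\Gamma(g)\|_{(\ell^1(G) \otimes_\eps \ell^1(G))^*} \leq K_G \|\Om_\sg\|_{T^2(G)} \|g\|_\infty$ for every $g \in \ell^\infty(G)$, which is the desired bound on $\|m\|_\eps$. Injectivity of $\ell^1(G, \sg_{\alpha,C})$ follows, and the $2$-summing property is then immediate from the contraction $\A \otimes_{g_2} \A \to \A \otimes_\eps \A$ used in Corollary \ref{C:injective-op alg}. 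The main obstacle is step one, the bookkeeping needed to check that a single $\beta$ really serves both earlier theorems at once; once this is in hand, the remainder is just transport of the extra multiplicative constant $M$ through the polynomial-weight argument.
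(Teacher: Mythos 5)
Your proposal is correct and follows essentially the same route as the paper: the paper likewise sets $\om = \sg_{\alpha,C}/\om_\beta = e^{p\circ\tau}$, invokes Theorem \ref{T:Expo weight-bounded-Poly weight} to get $\om(xy)\le M\om(x)\om(y)$, deduces the pointwise bound $\Sigma_{\alpha,C}\le M\Om_\beta$, and then reruns the Littlewood-multiplier estimate from the proof of Theorem \ref{T:ploy weight-operator alg}. Your version is slightly more explicit about checking that the single $\beta$ satisfies both sets of hypotheses and about the $\ell^\infty(G\times G)$-module step \eqref{eq-T2-module}, but these are details the paper leaves implicit rather than a different argument.
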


\begin{proof}
We define a function $\om : G \to (0,\infty)$ by
	$$\om(x)=\frac{\sg_{\alpha, C}(x)}{\om_\beta(x)}=e^{C\tau(x)^\alpha-\beta \ln (1+\tau(x))} \ \ (x\in G),$$
where $\om_\beta$ is the polynomial weight defined in (\ref{Eq:poly weight-defn}).
Then by Theorem \ref{T:Expo weight-bounded-Poly weight},
	$$\om(xy)\leq M\om(x)\om(y)  \ \ \ \ (x,y\in G),$$
where $M$ is the constant defined in (\ref{Eq:bound-poly expo weight}). Therefore if we let
$$\displaystyle \Sigma_{\alpha, C} :=\frac{\Gamma (\sg_{\alpha, C})}{\sg_{\alpha, C}\times \sg_{\alpha, C}}  \ \ \text{and} \ \ \Om_\beta:=\frac{\Gamma (\om_\beta)}{\om_\beta \times \om_\beta},$$ then
$$\Sigma_{\alpha, C} \leq M\Om_\beta \leq  M\left[\frac{2^{\beta-1}}{(1+\tau(x))^\beta}+ \frac{2^{\beta-1}}{(1+\tau(y))^\beta}\right].$$
A similar argument to the one presented in the proof of Theorem \ref{T:ploy weight-operator alg}
shows that
$$\norm{m}_\eps \le K_G \|\Sigma_{\alpha, C} \|_{T^2(G)} \le K_G M2^{\beta-1}\left[1+\sum_{n=1}^\infty \frac{f(n)-\lambda f(n-1)}{(1+n)^{2\beta}}\right]^{1/2}.$$
In particular, $\ell^1(G,\sg_{\alpha, C})$ is an injective algebra.
\end{proof}

We can actually show exactly when the weighted group algebras of exponential weight are isomorphic to an operator algebra.

\begin{thm}\label{T:Expo weight-operator alg}
Suppose that $0\leq \alpha \leq 1$ and $C>0$.
Then $\ell^1(G,\sg_{\alpha, C})$ is isomorphic to an operator algebra if and only if $0<\alpha<1$.
\end{thm}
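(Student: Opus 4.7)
The ``if'' direction will follow immediately from Theorem \ref{T:Expo weight-2-summing-alg} together with Corollary \ref{C:injective-op alg}: for $0<\alpha<1$ the algebra $\ell^1(G,\sigma_{\alpha,C})$ is injective and therefore isomorphic to an operator algebra.

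My plan for the ``only if'' direction is to rule out $\alpha\in\{0,1\}$ by a Varopoulos-style violation of the multi-variable von Neumann inequality of Theorem \ref{thm-inj-VN}. Since $G$ is a finitely generated infinite group of polynomial growth, Gromov's theorem gives that $G$ is virtually nilpotent, and I would choose an infinite-order element $g\in G$ along which the weight is quasi-multiplicative, i.e.\ $\omega(g^{n+m})\ge c_0\,\omega(g^n)\omega(g^m)$ for some $c_0>0$ and all $n,m\ge 0$, where $\omega=\sigma_{\alpha,C}$. The cyclic subgroup $\la g\ra\cong\Z$ then yields a closed isometric algebra embedding $\ell^1(\Z,\omega|_{\la g\ra})\hookrightarrow\ell^1(G,\sigma_{\alpha,C})$, since convolutions of functions supported on a subgroup remain supported there. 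If $\ell^1(G,\sigma_{\alpha,C})$ were isomorphic to an operator algebra, then so too would be its commutative closed subalgebra $\ell^1(\Z,\omega|_{\la g\ra})$, which by Theorem \ref{thm-inj-VN} would satisfy a multi-variable $(\delta,L)$-von Neumann inequality.

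To violate this inequality, I would fix $N\in\N$ and form the $2N$ normalized elements $a_i=\delta\,\delta_{g^i}/\omega(g^i)$ and $b_j=\delta\,\delta_{g^{N^2j}}/\omega(g^{N^2j})$ for $1\le i,j\le N$, each of norm $\delta$. Quasi-multiplicativity of $\omega$ then gives $\|a_ib_j\|_\omega\ge c_0\delta^2$ uniformly in $i,j$, and the products sit on the pairwise distinct points $g^{i+N^2j}$. Testing the random-sign bilinear polynomial $p(x,y)=\sum_{i,j}\varepsilon_{ij}x_iy_j$ yields $\|p(a,b)\|_\omega\ge c_0\delta^2 N^2$, whereas a Kahane--Salem--Zygmund type argument produces a choice of signs for which $\|p\|_{\infty,\overline{\mathbb{D}}^{2N}}=O(N^{3/2})$; the von Neumann inequality then forces $c_0\delta^2 N^2\le L\cdot O(N^{3/2})$, which fails for $N$ large. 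The main obstacle will be securing the quasi-multiplicativity of $\omega$ along $g$: this is automatic when $\alpha=0$ (the weight is bounded above and below), but when $\alpha=1$ it requires choosing $g$ outside the commutator subgroup, so that its image in the abelianization of $G$ is of infinite order and the word length $\tau(g^n)$ is asymptotically linear with bounded oscillation.
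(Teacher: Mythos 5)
Your ``if'' direction and your treatment of $\alpha=0$ are sound: restricting to an infinite cyclic subgroup gives a closed commutative subalgebra isomorphic to $\ell^1(\Z)$ (up to equivalence of weights), and the random-sign bilinear form violation of the von Neumann inequality is essentially Varopoulos's classical argument, provided you first pass from ``isomorphic to an operator algebra'' to ``injective'' via Theorem \ref{thm-op-alg}, since Theorem \ref{thm-inj-VN} is stated for injective algebras. The genuine gap is at $\alpha=1$, where everything rests on producing an infinite-order $g\in G$ with $\tau(g^{n+m})\ge\tau(g^n)+\tau(g^m)-D$ for a uniform $D$, equivalently $\tau(g^n)=\ell n+O(1)$. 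Your recipe --- choose $g$ outside the commutator subgroup so that its image in the abelianization has infinite order --- does not work in general: an infinite finitely generated group of polynomial growth can have \emph{finite} abelianization (the infinite dihedral group, or $\Z^2\rtimes\Z/4$ with $\Z/4$ acting by rotation), so there may be no homomorphism $G\to\Z$ detecting any $g$. Even for undistorted elements, subadditivity of $n\mapsto\tau(g^n)$ gives only $\tau(g^n)\ge\ell n$ and $\tau(g^n)\le(\ell+\eps)n+C_\eps$; the ``bounded oscillation'' claim $\tau(g^n)\le\ell n+O(1)$ is a delicate geometric statement that you have not proved. Without it there is no uniform $c_0$, the lower bound $\|a_ib_j\|\ge c_0\delta^2$ fails, and the contradiction $c_0\delta^2N^2\le L\cdot O(N^{3/2})$ evaporates.

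The paper sidesteps this entirely by abandoning commutativity: for each $m,n$ it picks $a_{m,n}\in F^{m+n}\setminus F^{m+n-1}$ (possible since $G$ is infinite) and factors $a_{m,n}=x_ny_m$ with $\tau(x_n)=n$, $\tau(y_m)=m$, $\tau(x_ny_m)=m+n$, so that $\sg_{1,C}(x_ny_m)/\bigl(\sg_{1,C}(x_n)\sg_{1,C}(y_m)\bigr)=1$ identically. Because the $x_n,y_m$ need not commute, the $Q$-algebra/von Neumann machinery is unavailable, and the paper instead applies the Dales--Lau criterion \cite[Theorem 8.11]{DL}: the non-vanishing iterated limits of $\Om(x_n,y_m)$ show that $\ell^1(G,\sg_{1,C})$ is not Arens regular (likewise $\ell^1(G)$ for $\alpha=0$), whereas every operator algebra is Arens regular. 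To rescue your route you would need to establish the bounded-oscillation property for some $g$ --- it does hold when $G$ surjects onto $\Z$, since for a surjection $\phi:G\to\Z$ and $u_0\in F$ maximizing $|\phi(u_0)|$ one gets $\tau(u_0^n)=n$ exactly --- but in the stated generality the Arens regularity argument is both simpler and complete.
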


\begin{proof}
The case $0<\alpha<1$ is done already.

If $\alpha=0$, then $\ell^1(G,\sg_{\alpha, C})\cong \ell^1(G)$ which is known to be non-Arens
regular (\cite[Theorem 8.11]{DL}) and so, it is not an operator algebra.
Now suppose that $\alpha=1$. For every $m,n\geq 2$, take $a_{m,n}\in F^{m+n}\setminus F^{m+n-1}$
(this is possible because $G$ is infinite).
Hence there are $x_n\in F^n$ and $y_m\in F^m$ such that
$$a_{m,n}=x_ny_m.$$
Moreover, since $a_{m,n}\in F^{m+n}\setminus F^{m+n-1}$, we have
$$x_n\in F^n \setminus F^{n-1}\ \text{and}\ \ y_m\in F^m\setminus F^{m-1}.$$
Therefore
$$\tau(a_{m,n})=m+n \ , \ \tau(x_n)=n \ , \ \tau(y_m)=m.$$
Hence
$$\frac{\sg_{1,C}(x_ny_m)}{\sg_{1,C}(x_n)\sg_{1,C}(y_m)}=\frac{e^{C\tau(x_ny_m)}}{e^{C(\tau(x_n)+\tau(y_m))}}
=\frac{e^{Cm+Cn}}{e^{C(n+m)}}=1.$$
Thus
$$\lim_{n\to \infty}\lim_{m\to \infty} \frac{\sg_{1,C}(x_ny_m)}{\sg_{1,C}(x_n)\sg_{1,C}(y_m)}=1,$$
which implies from \cite[Theorem 8.11]{DL} that $\ell^1(G,\sg_{1,C})$ is not Arens regular, and so, it
is not an operator algebra.

\end{proof}

\begin{rem}
We would like to point out that the upper-bounded estimate obtained in (\ref{Eq:Expo weight-g_2-norm})
goes to $\infty$ as $\alpha$ approaches either $0$ or $1$ (this happens because $\beta \to \infty$).
This coincides with the result obtained in the statement of Theorem \ref{T:Expo weight-operator alg}
since as $\alpha \to 0$ ($\alpha \to 1$, respectively), the weight $\sg_{\alpha,C} \to \sg_{0,C}=e^C$
($\sg_{\alpha,C} \to \sg_{1,C}$, respectively) and we showed there that neither $\ell^1(G,e^C)$ nor
$\ell^1(G,\sg_{1,C})$ is isomorphic to an operator algebra, and so, $\norm{m}_\eps$ is not bounded.
\end{rem}

\section{Remarks on $Q$-algebras and operator space versions}

The weighted group algebras in sections \ref{S:Poly. weight-poly growth} and \ref{S:Exp. weight-poly growth} are injective algebras,
but not isomorphic to $Q$-algebras since they are non-commutative in general.
However, their algebraic centers are actually isomorphic to $Q$-algebras.
Indeed, the injectivity of the tensor product tells us that the algebraic center is also an injective algebra with the smaller norm of the multiplication map. Then, the result in \cite{V} implies that they are isomorphic to $Q$-algebras.
Moreover, Theorem \ref{thm-inj-VN} allows us to determine $(\delta,L)$ for the corresponding multi-variable von Neumann inequality. Thus we have the following. We note that for an algebra $A$, we denote $ZA$ to be its algebraic center.

\begin{cor}\label{T:Poly weight-center-Q alg}
$Z\ell^1(G,\om_\beta)$ is isomorphic to a $Q$-algebra if one of the following conditions holds:\\
$(i)$ $\lambda=1$ and $\beta> \frac{d(G)}{2}$;\\
$(ii)$ $0< \lambda < 1$ and $\beta> \frac{d(G)+1}{2}$.

In this case $Z\ell^1(G,\om_\beta)$ satisfy multi-variable $(\delta,L)$-von Neumann inequality with
	$$\delta = e^{-1} \left(1 + K_G\min \{1,2^{\beta-1}\} \left[1+\sum_{n=1}^\infty \frac{f(n)-\lambda f(n-1)}{(1+n)^{2\beta}} \right]^{1/2}\right)^{-1}\;\;\text{and}\;\; L=1.$$
\end{cor}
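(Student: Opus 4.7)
The plan is to deduce the corollary from Theorem \ref{T:ploy weight-operator alg} together with the two facts stated just before it, namely that a commutative injective algebra is isomorphic to a $Q$-algebra (by \cite{V}), and that the multi-variable von Neumann inequality quantification is provided by Theorem \ref{thm-inj-VN}. The only real content is to check that the algebraic center $Z\ell^1(G,\om_\beta)$ inherits injectivity from the ambient algebra, with a norm bound for the multiplication map that is no worse than the one in \eqref{Eq:poly weight-Hag norm}.

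First I would record that $Z\ell^1(G,\om_\beta)$ is a closed subalgebra of $\ell^1(G,\om_\beta)$; closedness is immediate since the center is the intersection of the kernels of the bounded operators $a \mapsto ab - ba$ for $b\in \ell^1(G,\om_\beta)$. Since the injective tensor norm is injective in the sense of Banach space tensor products, the inclusion $Z\ell^1(G,\om_\beta) \hookrightarrow \ell^1(G,\om_\beta)$ induces an isometric embedding
\[
Z\ell^1(G,\om_\beta) \otimes_\eps Z\ell^1(G,\om_\beta) \hookrightarrow \ell^1(G,\om_\beta) \otimes_\eps \ell^1(G,\om_\beta).
\]
Because the center is a subalgebra, the multiplication map of $\ell^1(G,\om_\beta)$ restricts to the multiplication map of $Z\ell^1(G,\om_\beta)$. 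Combining these two observations and invoking Theorem \ref{T:ploy weight-operator alg}, the multiplication of $Z\ell^1(G,\om_\beta)$ extends to a bounded map on the injective tensor product with norm at most the right-hand side of \eqref{Eq:poly weight-Hag norm}. Hence $Z\ell^1(G,\om_\beta)$ is a commutative injective algebra.

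At this point the result of Varopoulos \cite{V} (quoted in the paragraph preceding Theorem \ref{thm-inj-VN}) yields that $Z\ell^1(G,\om_\beta)$ is isomorphic to a $Q$-algebra. To obtain the explicit $(\delta, L)$-von Neumann inequality, I would apply Theorem \ref{thm-inj-VN} directly: it gives the inequality with $L=1$ and $\delta = \bigl((1+\norm{m}_\eps)\,e\bigr)^{-1}$, where now $\norm{m}_\eps$ refers to the multiplication norm of $Z\ell^1(G,\om_\beta)$. Substituting the upper bound from \eqref{Eq:poly weight-Hag norm} for $\norm{m}_\eps$ (which is only increased by replacing the true norm on the center with the bound on the ambient algebra) produces exactly the stated value of $\delta$.

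No step should present any real obstacle; the mild point to be careful about is simply that one must use the injectivity of $\otimes_\eps$ (not, say, $\otimes_\gamma$) in order to transport the norm bound from the full algebra to its center, and that the constant $\delta$ given by Theorem \ref{thm-inj-VN} depends monotonically on $\norm{m}_\eps$ so that substituting the upper bound from \eqref{Eq:poly weight-Hag norm} in place of the exact value is legitimate.
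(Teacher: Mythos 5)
Your proposal is correct and follows exactly the route the paper takes: the injectivity of $\otimes_\eps$ transfers the bound \eqref{Eq:poly weight-Hag norm} to the multiplication of the (closed, commutative) center, Varopoulos's result in \cite{V} then gives the $Q$-algebra conclusion, and Theorem \ref{thm-inj-VN} supplies $(\delta,L)$ with the monotone substitution of the norm bound. Nothing is missing.
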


We have a corresponding result for exponential weights.

\begin{cor}\label{T:Expo weight-center-Q alg}
Suppose that $0< \alpha <1$ and $C>0$. Then $Z\ell^1(G,\sg_{\alpha, C})$ is isomorphic to a $Q$-algebra.
In this case, $Z\ell^1(G,\sg_{\alpha, C})$ satisfy multi-variable $(\delta,L)$-von Neumann inequality with
	$$\delta = e^{-1}\left(1+K_GM2^{\beta-1}\left [1+\sum_{n=1}^\infty \frac{f(n)-\lambda f(n-1)}{(1+n)^{2\beta}}\right]^{1/2}\right)^{-1}\;\;\text{and}\;\; L=1,$$
where
	$$\beta = \displaystyle \max\left \{1, \frac{6}{C\alpha(1-\alpha)}, \frac{d+(1-\delta_1(\lambda))}{2} \right\}$$
and $M$ is the constant defined in (\ref{Eq:bound-poly expo weight}).
\end{cor}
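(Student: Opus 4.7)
The plan is to deduce this from Theorem \ref{T:Expo weight-2-summing-alg} in the same fashion that Corollary \ref{T:Poly weight-center-Q alg} was deduced from Theorem \ref{T:ploy weight-operator alg}; the argument is essentially a three-line composition of results that are all available by this point in the paper.

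First, set $\A := \ell^1(G,\sg_{\alpha, C})$ and apply Theorem \ref{T:Expo weight-2-summing-alg} with $\beta$ chosen as in the statement of the corollary, giving that $\A$ is an injective Banach algebra with
$$\norm{m}_\eps \le K_G M 2^{\beta-1}\Bigl[1+\sum_{n=1}^\infty \frac{f(n)-\lambda f(n-1)}{(1+n)^{2\beta}}\Bigr]^{1/2}.$$
Second, pass to the algebraic centre $Z\A$. Since $\otimes_\eps$ is the \emph{injective} Banach-space tensor norm, the canonical inclusion $Z\A \otimes_\eps Z\A \hookrightarrow \A \otimes_\eps \A$ is isometric; and because $Z\A$ is a closed subalgebra of $\A$, the multiplication $m$ restricts to the multiplication map of $Z\A$ and takes values in $Z\A$. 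Consequently $Z\A$ is itself a commutative injective Banach algebra whose $\norm{m}_\eps$ is bounded by that of $\A$, hence by the same right-hand side displayed above.

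Third, invoke the Varopoulos characterization from \cite{V} (quoted just before Theorem \ref{thm-inj-VN}), which tells us that every commutative injective algebra is isomorphic to a $Q$-algebra; this settles the first assertion of the corollary. Finally, for the quantitative $(\delta,L)$-von Neumann statement, apply Theorem \ref{thm-inj-VN} to $Z\A$, which yields $L=1$ and $\delta = ((1+\norm{m}_\eps)e)^{-1}$. Substituting the upper estimate for $\norm{m}_\eps$ from the first step into this formula produces exactly the value of $\delta$ asserted in the corollary. No step here is an obstacle: the only mildly delicate point is the remark in step two that the centre inherits injectivity with a no-worse multiplication norm, but this is a direct consequence of the defining property of the injective tensor norm and requires no additional computation.
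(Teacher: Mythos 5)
Your proposal is correct and follows essentially the same route as the paper: the paper likewise deduces the corollary by noting that the centre inherits injectivity (with no larger $\norm{m}_\eps$) from the injectivity of the $\eps$-tensor norm, then cites Varopoulos for the $Q$-algebra conclusion and Theorem \ref{thm-inj-VN} with the bound from Theorem \ref{T:Expo weight-2-summing-alg} for the value of $\delta$. No issues.
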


We end this section with a remark on operator space versions.
Most of the results in this paper have their operator space versions available following the approach in \cite{CGLSS}.
For example, the estimates on $\norm{\Om_\beta}_{T^2(G)}$ in Theorem \ref{T:ploy weight-operator alg} tells us that
$\ell^1(G,\om_\beta)$ with the maximal operator space structure is completely isomorphic to an operator algebra.
But in the case of operator spaces we need to show that the algebra multiplication map $m$ extends to
a completely bounded maps on the Haagerup tensor product,
so that Littlewood multiplier theory has to be developed upto the level of operator spaces as in \cite{CGLSS}.

\section{examples}\label{S:Examples}

\subsection{The d-dimensional integers $\Z^d$}
A usual choice of generating set is
	$$F=\{(x_1,\ldots, x_d) \mid x_i\in \{-1,0,1\} \},$$
It is straightforward to see that
	$$\tau((x_1,\ldots, x_d)) = \max\{\abs{x_1}, \ldots, \abs{x_d}\}$$
and for every $n\in \N$,
	$$F^n=\{(x_1,\ldots, x_d) \mid x_i\in \{-n,\ldots, 0, \ldots, n\} \}.$$
Thus we get
	$$|F^n|=(2n+1)^d \ \ \ (n=0,1,2,\ldots)$$
and the order of growth of $\Z^d$ is $d$ with $f(n) = (2n+1)^d$ and $\lambda = 1$.
It follows from Theorem \ref{T:ploy weight-operator alg} that
$\ell^1(\Z^d,\om_\beta)$ is isomorphic to an operator algebras if $\beta > \frac{d}{2}$.
Moreover, we have
	\begin{align*}
		\sum_{n=1}^\infty \frac{f(n)-\lambda f(n-1)}{(1+n)^{2\beta}}
		& = \sum_{n=1}^\infty \frac{(2n+1)^d-(2n-1)^d}{(1+n)^{2\beta}}\\
		& \le \sum_{n=1}^\infty \frac{2d(2n+2)^{d-1}}{(1+n)^{2\beta}}
		= d2^d \sum_{n=1}^\infty (1+n)^{d-1-2\beta}\\
		& \le d2^d \int_1^\infty x^{d-1-2\beta}\,dx
		= \frac{d2^d}{2\beta-d}.
	\end{align*}
Since $\Z^d$ is an abelian group, Theorem \ref{T:Poly weight-center-Q alg} tells us that
$\ell^1(\Z^d,\om_\beta)$ is actually a $Q$-algebra and it satisfies multi-variable von Neumann inequality for $L=1$ and
	$$\delta=e^{-1}\left\{1+K_G \min\{1,2^{\beta-1}\} \left[1+\frac{d2^d}{2\beta-d} \right]^{1/2} \right\}^{-1}.$$
On the other hand, $\ell^1(\Z^d,\om_\beta)$ fails to be an injective algebra if $\beta\le \frac{d}{2}$ (\cite{CGLSS}).

Now let $\sg_{\alpha, C}$ be the exponential weight on $\Z^d$ defined in (\ref{Eq:Expo weight-defn}).
Theorem \ref{T:Expo weight-center-Q alg} tells us that $\ell^1(\Z^d,\sg_{\alpha, C})$ is a $Q$-algebra and
it satisfies multi-variable von Neumann inequality for $L=1$ and
	$$\delta = e^{-1}\left(1+K_GM2^{\beta-1} \left [ 1+\frac{d2^d}{2\beta-d} \right]^{1/2}\right)^{-1},$$
where
	$$\beta = \displaystyle \max\left \{1, \frac{6}{C\alpha(1-\alpha)}, \frac{d}{2} \right\}$$
and $M$ is the constant defined in (\ref{Eq:bound-poly expo weight}).

A case of particular interest happens when we let
	$$d=1 \ \ \ \text{and}\ \ \ C=\frac{6}{\alpha(1-\alpha)}.$$
In this case, we can choose $\beta=1$. Also if $K$ is the constant defined in (\ref{Eq:increasing-decreasing interval}), then it is easy to see that $0<K<1/6$. Hence $M=1$, and so we get
	$$\delta = \frac{1}{e(1+\sqrt{3}K_G)}.$$

\subsection{The 3-dimensional discrete Heisenberg group $\mathbb{H}_3(\Z)$}
We recall that the 3-dimensional discrete Heisenberg group
$\mathbb{H}_3(\Z)$ is a semidirect product of $\Z^2$ with $\Z$ and the product is defined as follows:
$$(a_1,b_1,c_1)\cdot(a_2,b_2,c_2)=(a_1+a_2,b_1+b_2,a_1b_2+c_1+c_2) \ \ \ (a_i,b_i,c_i)\in \mathbb{H}_3(\Z).$$
If we identify $\Z$ with the subgroup $\{(0,0,c) : c\in \Z \}$, then it is easy to see that
$\mathbb{H}_3(\Z) / \Z \cong \Z^2$. Hence $\mathbb{H}_3(\Z)$ is a 2-step nilpotent group and by the Bass-Guivarch formula (\ref{Eq:Bass-Guivarch formula}) we have
	$$d(\mathbb{H}_3(\Z))=4.$$
Hence if we let $\om_\beta$ be the polynomial weight on $\mathbb{H}_3(\Z)$,
then, $\ell^1(\mathbb{H}_3(\Z), \om_\beta)$ is isomorphic to an operator algebra provided that
	$$\beta > \frac{4+1}{2}=\frac{5}{2}.$$
Moreover, $Z\ell^1(\mathbb{H}_3(\Z),\om_\beta)$ satisfies multi-variable von Neumann inequality.
On the other hand, the restriction of $\om_\beta$ to $\Z$ will be a weight equivalent to the weight $\om'_\beta (c)=(1+|c|)^\beta$. Hence $\ell^1(\mathbb{H}_3(\Z),\om_\beta)$ has a closed subalgebra which is isomorphic to $\ell^1(\Z,\om'_\beta)$. Thus it follows from the result of Varopoulos \cite{V} that $\ell^1(\mathbb{H}_3(\Z),\om_\beta)$ fails to be an injective algebra if $\beta\leq 1/2$.

\subsection{The free group with two generators $\Fb_2$}

In this subsection we will show that $\ell^1(\Fb_2, \om_\beta)$ is not an injective algebra for any $\beta>0$.
Since $\Fb_2$ is one of the typical examples of exponentially growing groups,
this gives evidence to suggest that the condition of polynomial growth on the group
is necessary for a weighted group to be realizable as an operator algebras.

Recall also the Rudin-Shapiro polynomials defined in the following recursive way (\cite[Chapter 4]{Bor}).
	$$P_0(z) := 1,\;\; Q_0(z) :=1$$
and for $k\ge 0$
	$$P_{k+1}(z) := P_k(z) + z^{2^k}Q_k(z),\;\; Q_{k+1}(z) := Q_k(z) - z^{2^k}P_k(z).$$
By an induction on $k$, it is straightforward to check that the coefficients of $P_k$ are $\pm 1$, deg$P_k = \text{deg} Q=2^k-1$ and
$$ |P_k(z)|^2+|Q_k(z)|^2=2^{k+1} \ \ \ (z\in \mathbb{T}).$$ Hence
	$$\norm{P_k}_{L^\infty(\mathbb{T})} \le \sqrt{2^{k+1}}.$$
Using the following contraction (actually it is a metric surjection due to Nehari's theorem, see \cite[Section 6]{Pis2} for example)
	$$Q : L^\infty(\mathbb{T}) \to B(\ell^2), \;\; f \mapsto (\widehat{f}(-(i+j)))_{i,j\in \Z}$$
we get a sequence of Hankelian matrices
	$$A_{2^k} = Q(\overline{P}_k),\; k\ge 0,$$ where $A_{2^k}$ is a $2^k\times 2^k$ matrix with entries $\pm 1$ satisfying
	$$\norm{A_{2^k}}_{\text{op}} \le \sqrt{2^{k+1}},$$
where $\norm{\cdot}_{\text{op}}$ means the operator norm.

	\begin{thm}
	$\ell^1(\Fb_2, \om_\beta)$ is not an injective algebra for any $\beta>0$.
	\end{thm}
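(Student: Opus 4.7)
The plan is to exhibit, for each $\beta>0$, a sequence $(f_k)\subset B_{\ell^\infty(\Fb_2)}$ with $\|\tilde\Gamma(f_k)\|_{(\ell^1(\Fb_2)\otimes_\eps\ell^1(\Fb_2))^*}\to\infty$; by the identification $\|m\|_\eps=\|\tilde\Gamma\|$ developed in Section~\ref{S:Poly. weight-poly growth} this immediately rules out injectivity of the multiplication. The $f_k$ will be designed so that, on a suitably chosen grid $S_k\times T_k$ of size $N\times N$ with $N=2^k$, the restriction of $\tilde\Gamma(f_k)$ agrees with a constant multiple of the Rudin--Shapiro Hankelian matrix $A_{2^k}$; the exponential growth of $\Fb_2$ will then let the $\sqrt{N}$-type lower bound coming from the operator norm estimate $\|A_{2^k}\|_{\mathrm{op},\ell^2}\le\sqrt{2N}$ overpower the polynomial suppression $\Omega_\beta\sim n^{-\beta}$.

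To build the grid, I would first pick the smallest integer $n_k$ with $3^{n_k-1}\ge N$, so that $n_k=O(k)$. Because there are $4\cdot 3^{n-1}$ reduced words of length $n$ in $\Fb_2$, I can select inside the sphere of radius $n_k$ a set $S_k=\{s_1,\ldots,s_N\}$ of reduced words of length $n_k$ each ending in a power of the generator $a$, and a set $T_k=\{t_1,\ldots,t_N\}$ of reduced words of length $n_k$ each starting with a power of $b$. Then no cancellation occurs in any product $s_it_j$, so $\tau(s_it_j)=2n_k$ and the $N^2$ products are pairwise distinct reduced words. I would then set $f_k(s_it_j)=(A_{2^k})_{ij}$ and $f_k\equiv 0$ elsewhere, so that $\|f_k\|_\infty\le 1$.

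Because $\tau(s_i)=\tau(t_j)=n_k$ and $\tau(s_it_j)=2n_k$, the factor $\Omega_\beta(s_i,t_j)$ collapses to the single constant $\Omega_{n_k}:=(1+2n_k)^\beta/(1+n_k)^{2\beta}\ge c_\beta n_k^{-\beta}$ throughout $S_k\times T_k$, so $\tilde\Gamma(f_k)|_{S_k\times T_k}=\Omega_{n_k}A_{2^k}$. Pairing with the finite-rank tensor $u_k:=\sum_{i,j}(A_{2^k})_{ij}\,\delta_{s_i}\otimes\delta_{t_j}\in\ell^1(\Fb_2)\otimes\ell^1(\Fb_2)$ yields
\[
\la\tilde\Gamma(f_k),u_k\ra \;=\; \Omega_{n_k}\sum_{i,j}(A_{2^k})_{ij}^2 \;\gtrsim\; \Omega_{n_k}N^2,
\]
while the injective tensor norm of $u_k$ reduces to the cut-norm of $A_{2^k}$ and, by Cauchy--Schwarz together with the Rudin--Shapiro operator-norm bound, is controlled by
\[
\|u_k\|_\eps \;=\; \sup_{|x_i|,|y_j|\le 1}\Bigl|\sum_{i,j}(A_{2^k})_{ij}x_iy_j\Bigr| \;\le\; \sqrt{N}\,\|A_{2^k}\|_{\mathrm{op},\ell^2}\,\sqrt{N} \;\le\; \sqrt{2}\,N^{3/2}.
\]
Dividing produces
\[
\|\tilde\Gamma(f_k)\|_{(\ell^1(\Fb_2)\otimes_\eps\ell^1(\Fb_2))^*} \;\ge\; \frac{c_\beta\,2^{k/2}}{n_k^\beta} \;\gtrsim\; \frac{2^{k/2}}{k^\beta} \;\longrightarrow\; \infty,
\]
which is the required unboundedness.

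The essential obstacle is the combinatorial packing carried out in the second step: one must simultaneously fit $N=2^k$ non-cancelling rows and $N$ non-cancelling columns inside a sphere of radius only $n_k=O(k)$, and this is precisely where the exponential growth of $\Fb_2$ is indispensable. In a polynomial-growth group the same construction would force $N$ polynomial in $n_k$, after which the $\Omega_{n_k}\sim n_k^{-\beta}$ factor eventually swallows the Rudin--Shapiro gain $\sqrt{N}$ for large enough $\beta$, consistent with Theorem~\ref{T:ploy weight-operator alg}.
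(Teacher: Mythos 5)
Your argument is correct, and it takes a genuinely different route from the paper. The paper fixes an even $d>2\beta$, embeds a $d$-dimensional box $I^d_n=\{g_1^{x_1}g_2^{x_2}\cdots g_2^{x_d}\}$ into $\Fb_2$, and tests $\widetilde{\Gamma}$ against the $d$-fold tensor power of a Rudin--Shapiro matrix; the divergence then comes from $\sum_{1\le x_i\le n}(1+x_1+\cdots+x_d)^{-2\beta}\to\infty$, and the proof routes through Grothendieck's theorem, the identification of $(\ell^1\otimes_h\ell^1)^*$ with Schur multipliers, and a nontrivial lower bound on $\norm{\Om^n_\beta}_{\mathrm{op}}$ imported from \cite{CGLSS}. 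You instead use the exponential growth head-on: packing $N=2^k$ non-cancelling rows and columns into a single sphere of radius $n_k=O(k)$ makes $\Om_\beta$ \emph{constant} ($\asymp n_k^{-\beta}\asymp(\log N)^{-\beta}$) on the grid, after which one Rudin--Shapiro matrix and elementary duality against the injective norm ($\la\widetilde{\Gamma}(f_k),u_k\ra=\Om_{n_k}N^2$ versus $\norm{u_k}_\eps\le\sqrt{2}\,N^{3/2}$ by Cauchy--Schwarz) yield the $\sqrt{N}$ gain that swamps the logarithmic loss. Each step checks out: the words $s_it_j$ are reduced of length $2n_k$ and pairwise distinct, so $f_k$ is well defined with $\norm{f_k}_\infty\le1$, and the formula you use for $\norm{u_k}_\eps$ is exactly the injective norm on $\ell^1\otimes\ell^1$ (calling it the cut-norm is slightly loose, but the estimate is right). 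What your approach buys: it avoids Schur multipliers, Grothendieck's constant and the external operator-norm estimate, it uses one construction uniformly in $\beta$ rather than a dimension $d$ chosen after $\beta$, and it isolates exactly where exponential growth enters --- as your final remark notes, in a polynomial-growth group $N$ would be polynomial in $n_k$ and the factor $n_k^{-\beta}$ would eventually absorb $\sqrt{N}$. One cosmetic simplification is available: since the products $s_it_j$ are distinct, you could skip $f_k$ entirely and bound $\norm{m(\tilde u_k)}_{\ell^1(G,\om_\beta)}=\sum_{i,j}\Om_\beta(s_i,t_j)=\Om_{n_k}N^2$ directly, but your dual formulation via $\widetilde{\Gamma}$ is equally valid.
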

\begin{proof}

Let $g_1$ and $g_2$ be two generators of $\Fb_2$, and let $d$ be an even positive integer with $d> 2\beta$. Consider the following subsets of $\Fb_2$.
	$$I^d_n = \{g^{x_1}_1g^{x_2}_2g^{x_3}_1\cdots g^{x_d}_2 : 1\le x_i\le n \ \text{for}\ i=1,\ldots,d \},$$
Now we recall the function $\Om_\beta$ defined by
	$$\Om_\beta(g, g') = \frac{\om_\beta(gg')}{\om_\beta(g)\om_\beta(g')},\; \ \ (g,g'\in \Fb).$$
Let $\Om^n_\beta = \Om_\beta 1_{I^d_n \times I^d_n}$.
When $g, g'\in I^d_n$ are given by $g = g^{x_1}_1g^{x_2}_2g^{x_3}_1\cdots g^{x_d}_2$ and $g' = g^{y_1}_1g^{y_2}_2g^{y_3}_1\cdots g^{y_d}_2$ for $x_i, y_j\ge 1$, then we have
	$$\Om^n_\beta(g,g') = \left(\frac{1+x_1+\cdots+x_n+y_1+\cdots+y_n}{(1+x_1+\cdots+x_n)(1+y_1+\cdots+y_n)}\right)^\beta.$$
By a similar estimate as in \cite[Theorem 6.1]{CGLSS} we get
	$$\norm{\Om^n_\beta}_{\text{op}} \ge 2^{-\beta}n^{\frac{d}{2}}\left(\sum_{1\le x_1,\cdots, x_d\le n}\frac{1}{(1+x_1+\cdots+x_d)^{2\beta}}\right)^{\frac{1}{2}}.$$

Now using the Rudin-Shapiro polynomial, we have a sequence of matrices $A_n \in M_n$, $n=2^k$ $(k=1,2,\ldots)$ satisfying the following conditions:
	\begin{enumerate}
		\item $A_n = (a^n_{i+j})^n_{i,j=1}$ with $a^n_i \in \{\pm 1\}$.
		\item $\norm{A_n}_{\text{op}} \le \sqrt{2n}.$
	\end{enumerate}

We consider $b=(b_h)_{h\in \Fb_2}$ given by
	$$\begin{cases}b_{gg'} = a^n_{x_1+y_1}\cdots a^n_{x_d + y_d}
	& \text{for}\;\; g=g^{x_1}_1g^{x_2}_2g^{x_3}_1\cdots g^{x_d}_2,\;g'=g^{y_1}_1g^{y_2}_2g^{y_3}_1\cdots g^{y_d}_2,\; x_i, y_j\ge 1\\
	b_h = 0 & \text{elsewhere.}
	\end{cases}$$
In other words, the matrix $\left[b_{gg'}\right]_{g,g'\in I^d_n}$ is nothing but the $d$-tensor power of the matrix $\left[a^n_{x+y}\right]_{1\le x,y\le n}$. Thus it follows from \cite[Theorem 3.1 and Corollary 3.2]{Spr1}, (\ref{eq-modified-comulti}) and $\|b\|_{\text{op}}\le (2n)^{\frac{d}{2}}$ that
	\begin{align*}
	\norm{\widetilde{\Gamma}} & \ge \norm{\widetilde{\Gamma}(b)}_{(\ell^1(G) \otimes_\eps \ell^1(G))^*} = \|\Gamma(b) \Om \|_{(\ell^1(G) \otimes_\eps \ell^1(G))^*} \\
& \ge K^{-1}_G \|\Gamma(b) \Om \|_{(\ell^1(G) \otimes_h \ell^1(G))^*} = K^{-1}_G \norm{\left[b_{gg'}\Om^n_\beta(g,g')\right]_{g,g'\in I^d_n}}_{\text{Schur}} \\
&	\ge K^{-1}_G\norm{\left[b_{gg'}\right]_{g,g'\in I^d_n}}^{-1}_{\text{op}} \norm{\Om^n_\beta}_{\text{op}} \ \ \ \ \ \ (*)\\
	& \ge K^{-1}_G(2n)^{-\frac{d}{2}}2^{-\beta}n^{\frac{d}{2}}
	\left(\sum_{1\le x_1,\cdots, x_d\le n}\frac{1}{(1+x_1+\cdots+x_d)^{2\beta}}\right)^{\frac{1}{2}}\\
	& = K^{-1}_G2^{-\frac{d}{2}}2^{-\beta}\left(\sum_{1\le x_1,\cdots, x_d\le n}\frac{1}{(1+x_1+\cdots+x_d)^{2\beta}}\right)^{\frac{1}{2}} \\
  & \longrightarrow \infty \ \text{as}\ n = 2^k\to \infty \ \text{since}\ 2\beta < d.
	\end{align*}
Hence $\ell^1(\Fb_2, \om_\beta)$ is not an injective algebra for any $\beta>0$. Note that in ($*$), we are using the fact that the Schur product of $[b_{gg'}]$ with itself is the matrix with all entries 1 which is the identity in the Schur product.
\end{proof}


\begin{thebibliography}{99}

\bibitem{B} H. Bass, {\it The degree of polynomial growth of finitely generated nilpotent groups}, Proc. London Math. Soc. (3) {\bf 25} (1972), 603-614.

\bibitem{Bor}
P. Borwein, {\it Computational excursions in analysis and number theory.} (English summary)
CMS Books in Mathematics/Ouvrages de Mathématiques de la SMC, 10. Springer-Verlag, New York, 2002.

\bibitem{BBLV} J. Briëta, H. Buhrmana, T. Leeb, T. Vidickc, {\it All Schatten spaces endowed with the Schur product are $Q$-algebras}, J. Funct. Anal. \textbf{262}, Issue 1, 2012, 1-9.

\bibitem{BLe} D. Blecher and C. Le Merdy, {\it Operator algebras and their modules-an operator space approach}, London Mathematical Society Monographs. New Series, 30. Oxford Science Publications. The Clarendon Press, Oxford University Press, Oxford, 2004.

\bibitem{DL} H. G. Dales and A. T.-M. Lau, {\it The Second Duals of Beurling Algebras},
Mem. Amer. Math. Soc, {\bf 177} n. 836 (2005).

\bibitem{Dav} A. M. Davie, {\it Quotient algebras of uniform algebras},
J. London Math. Soc. (2) {\bf 7} (1973), 31--40.

\bibitem{DJT}
J. Diestel, H. Jarchow, A. Tonge, {\it Absolutely summing operators.} Cambridge Studies in Advanced Mathematics, 43. Cambridge University Press, Cambridge, 1995. xvi+474 pp.

\bibitem{CGLSS} M. Ghandehari, H.H. Lee, E. Samei and N. Spronk, Some Beurling-Fourier algebras are operator algebras. preprint.


\bibitem{Grom} M. Gromov, {\it Groups of Polynomial growth and Expanding Maps}, Inst. Hautes \'{E}tudes Sci. Publ. Math. No. 53, 1981, 53-73.

\bibitem{Gui} Y. Guivarch, {\it Groupes de Lie \`{a} croissance polynomiale}, C. R. Acad. Sci. Paris S\'{e}r. A-B, {\bf 272} (1971), A1695�A1696.

\bibitem{Le} C. Le Merdy, {\it The Schatten space $S_4$ is a $Q$-algebra},
Proc. Amer. Math. Soc. {\bf 126} (1998), no. 3, 715-719.

\bibitem{Pal} T. W. Palmer, {\it Banach algebras and general theorem of *-algebras}, II. Cambrige University Press, 2001.

\bibitem{P-G} D. P\'{e}rez-Garc\'{\i}a, {\it The trace class is a $Q$-algebra}.  Ann. Acad. Sci. Fenn. Math.  {\bf 31}  (2006),  no. 2, 287�295.

\bibitem{Pis2}
G. Pisier, Similarity problems and completely bounded maps. Second, expanded edition. Includes the solution to "The Halmos problem''. Lecture Notes in Mathematics, 1618. Springer-Verlag, Berlin, 2001.

\bibitem{Pis1} G. Pisier, {\it Introduction to Operator Space Theorey}, Cambridge
University Press, 2003.

\bibitem{Pis3} G. Pisier, {\it Grothendieck's theorem, past and present.}, Bull. Amer. Math. Soc. (N.S.) {\bf 49} (2012), no. 2, 237--323.

\bibitem{Ryan}
R.A. Ryan, {\it Introduction to tensor products of Banach spaces.} Springer Monographs in Mathematics. Springer-Verlag London, Ltd., London, 2002.

\bibitem{Spr1} N. Spronk, {\it Measurable Schur multipliers and completely bounded multipliers of the Fourier algebras}, Proc. London Math. Soc. (3) {\bf 89} (2004), no. 1, 161-192.

%\bibitem{Tom}
%N. Tomczak-Jaegermann, {\it Banach-Mazur distances and finite-dimensional operator ideals.} Pitman Monographs and Surveys in Pure and Applied Mathematics, 38. Longman Scientific \& Technical, Harlow; copublished in the United States with John Wiley \& Sons, Inc., New York, 1989.

\bibitem{V} N.T. Varopoulos, {\it Some remarks on $Q$-algebras},  Ann. Inst. Fourier (Grenoble) {\bf 22}  (1972), no. 4, 1-11.

\bibitem{V2}
N.T. Varopoulos, {\it Sur les quotients des alg\`ebres uniformes.} (French) C. R. Acad. Sci. Paris Sér. A-B {\bf 274} (1972), A1344-A1346.

\end{thebibliography}
\end{document}